\theoremstyle{plain}
\newtheorem*{thm*}{Theorem}
\newtheorem{thm}{Theorem}[section]
\Crefname{thm}{Theorem}{Theorems}
\newtheorem*{lem*}{Lemma}
\newtheorem{lem}[thm]{Lemma}
\Crefname{lem}{Lemma}{Lemmas}
\newtheorem*{claim*}{Claim}
\newtheorem{claim}[thm]{Claim}
\crefname{claim}{Claim}{Claims}
\Crefname{claim}{Claim}{Claims}
\Crefname{prop}{Proposition}{Propositions}
\newtheorem{cor}[thm]{Corollary}
\Crefname{cor}{Corollary}{Corollaries}
\newtheorem{conj}[thm]{Conjecture}
\Crefname{conj}{Conjecture}{Conjectures}
\Crefname{qn}{Question}{Questions}
\newtheorem{obs}[thm]{Observation}
\Crefname{obs}{Observation}{Observations}
\theoremstyle{definition}
\newtheorem{prob}[thm]{Problem}
\Crefname{prob}{Problem}{Problems}
\Crefname{defn}{Definition}{Definitions}
\theoremstyle{remark}
\renewenvironment{proof}[1][]{\begin{trivlist}
\item[\hspace{\labelsep}{\bf\noindent Proof#1.\/}] }{\qed\end{trivlist}}
\newcommand{\remove}[1]{}
\newcommand{\mQ}{\mathcal{Q}}
\newcommand{\mM}{\mathcal{M}}
\newcommand{\mF}{\mathcal{F}}
\newcommand{\mK}{\mathcal{K}}
\newcommand{\mL}{\mathcal{L}}
\newcommand{\mP}{\mathcal{P}}
\newcommand{\mS}{\mathcal{S}}
\begin{document}


\title{Highly linked tournaments with large minimum out-degree}
\date{\vspace{-5ex}}
\author{
    Ant{\'o}nio Gir\~{a}o
    \thanks{
        Department of Pure Mathematics and Mathematical Statistics, 	
        University of Cambridge, 
       	Wilberforce Road, 
        CB3\;0WB Cambridge, 
        UK;
        e-mail:
        \mbox{\texttt{A.Girao@dpmms.cam.ac.uk}}\,.
    } 
    \and
     Richard Snyder
     \thanks{
     	Department of Mathematical Sciences, The University
	of Memphis, Memphis, Tennessee;
	e-mail: \mbox{\texttt{rjsnyder23@gmail.com}}\,
	}
}

\maketitle
\singlespace

\begin{abstract}

    \setlength{\parskip}{\medskipamount}
    \setlength{\parindent}{0pt}
    \noindent
    
    We prove that there exists a function $f:\mathbb{N} \rightarrow \mathbb{N}$ such 
    that for any positive integer $k$, if $T$ is a strongly $4k$-connected tournament with 
    minimum out-degree at least $f(k)$, then $T$ is $k$-linked. This resolves a
     conjecture of Pokrovskiy up to a factor of $2$ of the required connectivity. Along the way, we show that a tournament
    with sufficiently large minimum out-degree contains a subdivision of a complete directed 
    graph. This result may be of independent interest.

  \end{abstract}

\section{Introduction}

Given a positive integer $k$, a graph is said to be $k$-\emph{linked} if for any two disjoint sets of vertices $\{x_1, \ldots , x_k\}$ and $\{y_1, \ldots , y_k\}$ there are vertex disjoint paths $P_1, \ldots , P_k$ such that $P_i$ joins $x_i$ to $y_i$ for $i = 1, \ldots, k$. Clearly, $k$-linkedness is a stronger notion than $k$-connectivity. But how much stronger is it? Larman and Mani~\cite{Larman_Mani} and Jung~\cite{Jung} showed that there is an $f(k)$ such that any $f(k)$-connected graph is $k$-linked. They based their result on a theorem of Mader~\cite{Mader1}, which
implies that for any $k$, any sufficiently connected graph contains a subdivision of a complete graph on $3k$ vertices, and noticed that any $2k$-connected graph containing such a subdivision must be 
$k$-linked. Their proofs show that $f(k)$ can be taken to be exponential in $k$. 
Later, Bollob{\'a}s and Thomason~\cite{BollobasThomason} proved that $f(k) = 22k$ will do. 

The definitions of $k$-connectivity and $k$-linkedness carry over to directed graphs. A directed graph is \emph{strongly connected} if for any pair of distinct vertices $x$ and $y$ there is a directed path from $x$ to $y$, and is strongly $k$-connected if it remains connected upon removal of any set of at most $k-1$ vertices. In the sequel, we shall omit the use of the word
`strongly' with the understanding that we always mean strong connectivity.  A directed graph $D$ is $k$-\emph{linked} if 
for any two disjoint sets of vertices $\{x_1, \ldots , x_k\}$ and $\{y_1, \ldots, y_k\}$ there are pairwise vertex disjoint directed 
paths $P_1, \ldots , P_k$ such that $P_i$ has initial vertex $x_i$ and terminal vertex $y_i$ for every $i \in [k]$. Thus, $D$ 
is $1$-linked if and only if it is connected.


Directed graphs exhibit quite different behaviour from undirected graphs with respect to the relations they bear between connectivity and linkedness. 
Indeed, Thomassen~\cite{Thomassen_construct} constructed directed graphs with arbitrarily large connectivity which are not even $2$-linked.
 Since large connectivity does not necessarily imply linkedness for general directed graphs, it is natural to consider the situation for a restricted class of directed graphs, namely, \emph{tournaments}. Thomassen~\cite{Thomassen_tourn} proved that there is a $g(k)$ such that every $g(k)$-connected tournament is $k$-linked, 
with $g(k) =C k!$, for some absolute constant $C$.
Greatly improving Thomassen's bound on $g(k)$, K{\"u}hn, Lapinskas, Osthus, and Patel~\cite{Lap_Kuhn_Osthus} showed that one may 
take $g(k) = 10^4k\log k$ and still ensure $k$-linkedness. They went on to conjecture that $g(k)$ may be taken to be linear in $k$. 
Pokrovskiy~\cite{Pokrovskiy} resolved this conjecture by showing that any $452k$-connected tournament is $k$-linked. Except for small values of $k$, an optimal bound for $g(k)$ is not known. 
Bang-Jensen~\cite{Bang-Jensen} showed that any $5$-connected tournament is $2$-linked, and there exists a family of $4$-connected tournaments which are 
not $2$-linked. Moreover, it is easy to construct $(2k-2)$-connected tournaments with arbitrarily large out and in-degree which are not $k$-linked:  consider the blow up of a directed triangle with vertex sets $A, B, C$ such that $|C| = 2k - 2$ 
and $A$ and $B$ have size at least $2k$.

Going back to undirected graphs for a moment, if some density conditions are assumed on the graph, then Bollob{\'a}s and Thomason's $22k$ can be taken all the way down to $2k$, since Mader~\cite{Mader1} proved that a graph with sufficiently large average degree contains a subdivision of a complete graph of order $3k$. Note that $2k$ is close to the theoretical minimum connectivity in any $k$-linked graph (a $k$-linked graph is necessarily $(2k-1)$-connected).
Recently, Thomas and Wollan~\cite{ThomasWollan2} showed that any $2k$-connected graph with average degree at least $10k$ is $k$-linked, greatly reducing the bound on the required average degree. Motivated by this result, Pokrovskiy~\cite{Pokrovskiy} conjectured that a similar phenomenon should occur for tournaments with a natural 
`density' condition:  high minimum out-degree and in-degree. In particular, he conjectured that there is a function $f: \mathbb{N} \rightarrow \mathbb{N}$ such that any $2k$-connected tournament with minimum out and in-degree at least $f(k)$ is $k$-linked.
Here is our main result, which makes progress on this conjecture. 

\begin{thm}\label{thm:main}
For every positive integer $k$ there exists $f(k)$ such that 
every $4k$-connected tournament $T$ with $\delta^+(T) \ge f(k)$
is $k$-linked.
\end{thm}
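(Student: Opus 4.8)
The plan is to follow the classical Larman–Mani/Jung strategy, adapted to tournaments. The key observation is that if a digraph $D$ is $2k$-connected and contains a subdivision of a complete directed graph on enough vertices (with the branch vertices playing the role of a ``hub''), then $D$ is $k$-linked: given terminals $x_1,\dots,x_k,y_1,\dots,y_k$, one uses Menger's theorem to route $2k$ disjoint paths from $\{x_i\}\cup\{y_i\}$ into $2k$ distinct branch vertices of the subdivision, and then uses the internally disjoint paths of the complete subdivision to connect the branch vertex reached from $x_i$ to the branch vertex reached from $y_i$. For tournaments the same scheme works, and in fact connectivity $4k$ is (more than) enough to absorb the loss of branch vertices to the linkage paths themselves.

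Concretely, I would first quantify the ``subdivision'' ingredient. By the result advertised in the abstract — that a tournament with sufficiently large minimum out-degree contains a subdivision of a complete directed graph $\overset{\leftrightarrow}{K}_N$ — fix $f(k)$ large enough that $T$ contains a subdivision $S$ of $\overset{\leftrightarrow}{K}_N$ with $N = N(k)$ a suitably large polynomial in $k$ (say $N = 100k$, with room to spare; the exact constant does not matter). Let $B$ be the set of branch vertices of $S$. The core of the argument is then the routing lemma: in a $4k$-connected tournament $T$ containing such an $S$, for any $2k$ terminals we can find $2k$ vertex-disjoint paths from the terminals to $2k$ distinct branch vertices of $B$, \emph{internally avoiding} $B$. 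This is where I would spend most of the effort.

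For that routing step, the natural tool is a Menger-type / fan argument. Let $W = \{x_1,\dots,x_k,y_1,\dots,y_k\}$; we want $2k$ disjoint $W$–$B$ paths with distinct endpoints in $B$ and no internal vertices in $B$. Since $|W| = 2k$ and $T$ is $4k$-connected, there is no separator of size $<4k$ between $W$ and any $2k$-subset of $B$; more carefully, one argues: delete from $T$ all but $2k$ branch vertices — but $B$ is large, so we can afford to keep $3k$ branch vertices as targets even after some are ``used up''. The subtlety is that these paths might want to pass through branch vertices of $S$ as internal vertices, or through the subdivision paths of $S$, corrupting them; I would handle this by first routing greedily, then rerouting any path that hits $B$ internally so that it terminates there instead, shrinking the target set but keeping it above $2k$. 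Once the $2k$ disjoint paths $P_1^x,\dots,P_k^x$ (from $x_i$) and $P_1^y,\dots,P_k^y$ (from $y_i$) into distinct branch vertices $b_1^x,\dots,b_k^x,b_1^y,\dots,b_k^y$ are in hand, I connect $b_i^x$ to $b_i^y$ along the subdivision path of $S$ joining them. Because $S$ is a subdivision of a \emph{complete} digraph, these connecting paths are pairwise internally disjoint, and because we chose them in a matching fashion they avoid the branch vertices used as other endpoints. Concatenating $P_i^x$, the subdivision path $b_i^x \to b_i^y$, and the reverse-oriented $P_i^y$ gives the desired $x_i$–$y_i$ path, and the $k$ resulting paths are pairwise vertex-disjoint.

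The main obstacle is precisely the interference management in the routing step: the linkage paths we pull from the terminals can run through the interior of the subdivision $S$, so we cannot treat $B$ and $S$ as an untouchable black box. Getting the bookkeeping right — ensuring that after all reroutings we still have $2k$ \emph{distinct} available branch vertices and $2k$ \emph{internally disjoint} connecting paths through what remains of $S$ — is the delicate part, and it is what forces the connectivity requirement up to $4k$ rather than the conjectured $2k$: we need roughly $2k$ units of connectivity to pull the paths out of the terminals and another $2k$ units of slack to reroute around corrupted portions of $S$ and still reach enough branch vertices. I expect the subdivision-existence result (assumed from earlier in the paper) to be the other heavy input, but by hypothesis that is already available, so the burden here is the linkage extraction.
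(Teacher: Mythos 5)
Your high-level framing is the same as the paper's: embed a large subdivision of a complete digraph, use connectivity to route paths from the terminals into its branch vertices, then reroute through the subdivision. But the way you fill in the two hard steps doesn't work, and the places you identify as ``the delicate part'' are not where the real difficulties lie.

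First, the routing step. You describe it as a single Menger application sending $2k$ disjoint paths from $\{x_i\}\cup\{y_i\}$ into $B$, and then you speak of concatenating $P_i^x$, a subdivision path, and ``the reverse-oriented $P_i^y$.'' In a digraph you cannot reverse a directed path, so the paths from $B$ to each $y_i$ have to be routed in the correct orientation from the outset. This means you need $k$ disjoint directed paths \emph{out} of $X$ into $B$ \emph{and simultaneously} $k$ disjoint directed paths \emph{out} of $B$ into $Y$, all $2k$ paths pairwise vertex-disjoint and internally avoiding $B$. Directed Menger does not hand you this in one shot: an $X$-to-$B$ system and a $B$-to-$Y$ system obtained independently will generally collide. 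The paper's Lemma~\ref{lem:reversing_paths} exists precisely for this, and its proof (building partial matchings $W_A, W_B$ into the hub, then finding the two path systems with careful minimality and rerouting arguments) is the place where $4k$-connectivity is actually consumed — not, as you claim, on ``slack to reroute around corrupted portions of $S$.'' Your proposal treats this step as routine and it is not.

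Second, the rerouting step inside the subdivision. You propose to connect $b_i^x$ to $b_i^y$ directly along the subdivision path joining them, and you note only that other linkage paths may hit $B$; but the real problem is that they may pass through the \emph{interior} of the subdivision paths of $S$, destroying exactly the path you want to use. A plain subdivision of $\overrightarrow{K}_N$ does not give you any mechanism to recover from this. The paper needs the augmented structure $\mK_r^*$, where each long subdivision path $P_{uv}$ carries extra ``loop'' paths $L_{uv}^u, L_{uv}^v$ that let you escape from the interior of a corrupted path back to a branch vertex. These loops, together with the ``backwards transitive'' structure of a \emph{minimal} subdivision, are what make the claims about intersections (Claims~\ref{claim:intersect0}--\ref{claim:intersect2}) go through. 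Moreover, the paper does not connect $z_i$ to $w_i$ directly: it routes through a fresh intermediate branch vertex $u_i$ chosen from a carefully filtered set $U''$ that is ``good'' (far in $\mS$-distance from the linkage paths), via $P_{w_iu_i}$ and $P_{u_iz_i}$. Obtaining $k$ such good intermediates forces $|U|=\Theta(k^2)$ branch vertices (the paper takes $12k^2$), so your $N=100k$ ``with room to spare'' is also too small. In short: the skeleton of your argument is right, but the plain-$\overrightarrow{K}_N$-plus-Menger combination you describe does not close either gap, and both the directional routing lemma and the augmented $\mK_r^*$ structure are genuinely necessary pieces that your proposal omits.
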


Note that we do not need to assume any lower bound on the minimum in-degree. We also remark that our proof yields the upper bound $f(k) \leq 2^{2^{Ck^4}}$ for some constant $C$, which comes from the upper bound on $d(k)$ in the next theorem. 

Recall that the complete directed graph $\overrightarrow{K}_k$ is the directed graph on $k$ vertices 
where, for every pair $x, y$ of distinct vertices, both $xy$ and $yx$ are present.  
In order to prove \Cref{thm:main} we shall show that large minimum out-degree allows 
us to embed subdivisions of the complete directed graph $\overrightarrow{K}_k$. As we mentioned 
earlier, Mader~\cite{Mader1} showed 
that for any positive integer $k$ there is $g(k)$ such that 
any graph with average degree at least $g(k)$ contains a subdivision of $K_k$.
The following theorem can be viewed as an analogue of Mader's result for tournaments, replacing 
`average degree' with `minimum out-degree', and may be of independent interest.

\begin{restatable}{thm}{thmSubdivision}\label{thm:subdivision}
For any positive integer $k$ there exists a $d(k)$ such that 
the following holds. If $T$ is a tournament with $\delta^+(T) \ge d(k)$, then
$T$ contains a subdivision of $\overrightarrow{K}_k$.
\end{restatable}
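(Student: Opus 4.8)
The plan is to argue by recursion on the strong-connectivity structure of $T$, handling at each stage a dichotomy between ``well connected'' and ``has a small separator''. First I would make the routine reduction to the strongly connected case: if $T$ is not strongly connected, its terminal strong component $T^*$ receives all of the out-edges of each of its vertices, so $\delta^+(T^*) \ge \delta^+(T) \ge d(k)$, and a subdivision inside $T^*$ is a subdivision inside $T$; hence I may assume $T$ is strongly connected.

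Now fix a threshold $m = m(k)$. If the current (strongly connected) tournament is $m$-connected, I would finish here: a tournament of connectivity at least $m(k)$ contains a subdivision of $\overrightarrow{K}_k$ once $m$ is a suitable function of $k$ (this can be extracted from the cited fact that sufficiently connected tournaments are $k$-linked, or shown directly by fixing $k$ branch vertices and routing the $k(k-1)$ paths greedily, keeping them short using small dominating sets). If instead the tournament is not $m$-connected, there is a separator $S$ with $|S| \le m-1$; writing the vertex set as $W \sqcup S \sqcup B$ with $B$ the terminal strong component of (the tournament) $-\,S$, one has $W \ne \emptyset$, every vertex of $W$ beats every vertex of $B$, and $\delta^+(B) \ge \delta^+(\text{current}) - |S|$, which is still large. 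I then recurse into the strongly connected tournament $B$.

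Iterating produces a nested chain $T = T_0 \supsetneq T_1 \supsetneq \cdots$ of strongly connected tournaments, with separators $S_1, S_2, \ldots$ and outer blocks $W_1, W_2, \ldots$, such that $W_1 \cup \cdots \cup W_i$ completely dominates $T_i$ while $\delta^+(T_i) \ge d(k) - \sum_{j \le i} |S_j| \ge d(k) - (m-1)i$. The chain stops as soon as some $T_i$ is $m$-connected, which is the already-handled case; so the remaining case is that the chain runs for many steps. The idea is that a long chain gives us $i$ pairwise disjoint ``bottlenecks'' $S_1,\dots,S_i$, each a separator that every directed path leaving $T_i$ upward must traverse, so each creates a cycle-like structure around which a backward arc of $\overrightarrow{K}_k$ can be routed. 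Placing the $k$ branch vertices at spread-out levels of the chain, the ``forward'' arcs are essentially free (a vertex of $W_j$ beats everything in $T_j$, so it has a direct edge to all lower branch vertices), and each of the $\binom{k}{2}$ ``backward'' arcs is routed, using a bounded number of vertices, around one of the bottlenecks; thus it suffices that $d(k)$ be large enough to force the chain to run for more than a bounded function of $k$ steps before $\delta^+$ is exhausted.

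The main obstacle is the simultaneous bookkeeping needed to realise all $k(k-1)$ arcs at once. The branch vertices and the internal vertices of different arcs must be pairwise disjoint; routing a backward arc through a given bottleneck must not destroy the degree and reachability properties required for the arcs still to be built (and several backward arcs may need to thread the same bottleneck, which forces the separators on the critical stretch to be reasonably large); and the short forward arcs, the around-a-bottleneck backward arcs, and the completion in the $m$-connected case must all be made compatible. Carrying this out---choosing the order in which arcs are built, reserving vertices, and repeatedly passing to cleaner, more homogeneous substructures so that the invariants survive the whole process---is the substance of the proof, and this repeated, nested extraction of well-structured pieces is what is responsible for the large bound on $d(k)$.
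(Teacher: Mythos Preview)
Your outline takes a genuinely different route from the paper. The paper never touches connectivity or linkage; it inducts on the number $m$ of arcs already realised in a partial $\overrightarrow{K}_k$, using Lichiardopol's lemma (any tournament with $\delta^+\ge d$ contains a subtournament of order at most $3d^2$ with $\delta^+\ge d$) to localise an $m$-arc subdivision inside a set of bounded size. The one nontrivial idea is that this small subdivision can always be arranged to sit inside $N^+(z)$ for some vertex $z$, and that choosing such a $z$ of minimum out-degree lets one route the $(m{+}1)$st arc through the strong-component structure of $N^+(z)$. This gives $d(k,m{+}1)\le 7\,d(k,m)^2$, hence a doubly-exponential $d(k)$, and completely avoids simultaneous routing: only one new path is added at each step.

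Your high-connectivity branch is fine (Pokrovskiy's theorem, or greedy routing via small dominating sets, does the job). The gap is in the chain case, which you yourself call ``the substance of the proof'' but do not carry out. The parenthetical that several backward arcs through a common bottleneck ``forces the separators on the critical stretch to be reasonably large'' is a desideratum, not a consequence of your setup: your hypotheses give only $|S_j|\le m-1$, and $|S_j|=1$ is perfectly consistent with the construction. Concretely, every one of the $k-1$ backward arcs terminating at the shallowest branch vertex $b_1\in W_{\ell_1}$ must leave $T_{\ell_1}$ via $\bigcup_{j\le\ell_1}S_j$, and from there still reach $b_1$ inside the merely strongly connected $T_{\ell_1-1}$, all while staying disjoint from the other $\binom{k}{2}-1$ backward paths and from the remaining branch vertices. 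You provide no mechanism for any of this. As written the proposal is a plausible plan with the hard half missing, not a proof.
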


Our proof gives the bound $d(k)\leq 2^{2^{Ck^2}}$ for some constant $C$. We leave the determination of the smallest $d(k)$ such that \Cref{thm:subdivision} holds as an open problem.
We also note that, as shown by Mader~\cite{Mader3}, this theorem does not hold if we replace $T$ by a general digraph. This fact also follows from a result 
of Thomassen~\cite{Thomasseneven}, who showed that for every integer $n$ there exist digraphs on $n$ vertices with minimum out-degree at least $\frac{1}{2}\log n$ which do not contain a directed cycle of even length. But since any 
subdivision of $\overrightarrow{K}_3$ must contain an even directed cycle, these digraphs do not contain any subdivision of a complete directed 
graph.

In order to prove \Cref{thm:main}, we shall need a little more than \Cref{thm:subdivision}. Roughly speaking, we shall first embed in $T$ a subdivided 
$\overrightarrow{K}_k$, and then attach a few additional paths to it (see \Cref{sec:subdivision}).

\subsection{Organization and Notation}
The remainder of this paper is organized as follows.
In \Cref{sec:subdivision}, we prove \Cref{thm:subdivision} which allows us to embed subdivisions
of a complete directed graph and related structures in tournaments with high minimum out-degree. In \Cref{sec:main}, we shall prove one 
preparatory lemma and then finish our proof of \Cref{thm:main}. Our final section concludes with some open problems.

Our notation is standard. Thus, for a directed graph $D$ we use $N^+(x), N^-(x), d^+(x), d^-(x)$ to denote the out-neighbourhood, in-neighbourhood, 
out-degree, and in-degree of a vertex $x$, respectively. We use $\delta^+(D)$ to denote the minimum out-degree of $D$. A directed path 
$P = x_1\ldots x_\ell$ in $D$ is a sequence of distinct vertices such that $x_{i}x_{i+1}$ is an edge for every $i =1, \ldots , \ell -1$. We call 
$x_1$ the \emph{initial vertex} and $x_\ell$ the \emph{terminal vertex} of $P$. The length of $P$ 
is the number of its directed edges. We say that $P$ is \emph{internally disjoint} from some subset $X \subset V(D)$ if 
$\ell \ge 3$ and $\{x_2, \ldots , x_{\ell -1}\} \cap X = \varnothing$. If $A$ and $B$ are subsets of $V(D)$, then we shall write $A \rightarrow B$ if every edge with one endpoint in $A$ and the 
other endpoint in $B$ is directed from $A$ to $B$.
Lastly, if $\mP$ is a family of directed paths in a digraph, then we use $\bigcup \mP$ to denote the set $\bigcup_{P \in \mP} V(P)$.

\section{Proof of \Cref{thm:subdivision}}\label{sec:subdivision}

The first proofs of the result that graphs with sufficiently large connectivity are $k$-linked use a result of Mader, which allows one 
to embed a subdivision of a complete graph in a graph with sufficiently large average degree. Our proof of \Cref{thm:main} follows 
a similar strategy. In order to proceed, we need a directed analogue of Mader's result for tournaments: we prove this in the 
present section.
We shall use the following simple lemma of Lichiardopol~\cite{Lichiardopol} (independently rediscovered by Havet and Lidick{\'y}~\cite{Havet-Lidicky}). 
We include a short proof for convenience of the reader.
\begin{lem}\label{lem:Hav_Lid}
Every tournament with minimum out-degree at least $k$ has a subtournament
with minimum out-degree $k$ and order at most $3k^2$.
\end{lem}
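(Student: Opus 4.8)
The plan is to do a greedy/iterative descent: as long as the current subtournament is "too big" relative to its minimum out-degree, find a proper subtournament which still has minimum out-degree at least $k$, and keep going. The key quantitative input is the following averaging observation. If $T$ is a tournament on $n$ vertices with $\delta^+(T)\ge k$, then $e(T)=\binom n2\ge kn$, so $n\ge 2k+1$; more importantly, summing out-degrees gives $\sum_{x}d^+(x)=\binom n2$, so the \emph{average} out-degree is $(n-1)/2$. If $n>3k^2$ — equivalently $3k^2<n$ — I want to locate a vertex whose deletion does not destroy the property $\delta^+\ge k$, i.e.\ a vertex $v$ such that every other vertex of out-degree exactly $k$ is \emph{not} an in-neighbour of $v$. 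Equivalently, writing $W$ for the set of vertices of out-degree exactly $k$ in $T$, I need a vertex $v\notin W$ (or even $v\in W$ is fine as long as we argue carefully) that dominates no vertex of $W$ — then $T-v$ still has $\delta^+\ge k$.

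First I would set $W=\{x:d^+_T(x)=k\}$. Each $w\in W$ has exactly $k$ out-neighbours, so the total number of edges leaving $W$ (counted into $V(T)$) is exactly $k|W|$, hence on average a vertex of $T$ receives at most $k|W|/n$ edges from $W$. If $k|W|/n<1$ — i.e.\ $n>k|W|$ — then some vertex $v$ receives \emph{no} edge from $W$, and I can safely delete it, strictly decreasing $|V(T)|$ while keeping $\delta^+\ge k$. Since trivially $|W|\le n$, this is not immediately enough; the point is that when $n$ is large the out-degree sum forces $|W|$ to be genuinely smaller than $n$. Concretely, if $n\le 3k^2$ we are already done; otherwise I want to show $|W|<n/k$, i.e. $k|W|<n$. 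Summing out-degrees, $\binom n2=\sum_x d^+(x)\ge k|W|+(k+1)(n-|W|)=(k+1)n-|W|$, so $|W|\ge (k+1)n-\binom n2=(k+1)n-\tfrac{n(n-1)}2$, which is the \emph{wrong} direction; the correct bound comes from the other side: $\binom n2=\sum_x d^+(x)$, and since at most $|W|$ vertices have out-degree $k$ and the rest have out-degree $\ge k+1$... this still bounds $|W|$ from below. So instead I should bound edges \emph{out of} $W$ directly: these number exactly $k|W|$, and they all land in $V(T)$, a set of size $n$; if additionally no vertex is hit too often we're done, but a single vertex could be hit by all of $W$. The clean fix, and the step I expect to be the real content, is: consider the subtournament $T[W]$ induced on $W$ itself. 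If $|W|\ge 2k+1$ then $T[W]$ contains a vertex $w$ with $d^+_{T[W]}(w)\ge k$ (since the average out-degree inside $T[W]$ is $(|W|-1)/2\ge k$), and then $w$ has $\ge k$ out-neighbours already inside $W$, so $d^+_T(w)\ge k$ with all out-neighbours... no — that only re-confirms $w\in W$.

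Let me restate the mechanism I actually intend to use, which is the standard one: if $n>3k^2$, I claim there is a vertex whose removal keeps $\delta^+\ge k$. Count pairs $(w,v)$ with $w\in W$, $v\in N^+(w)$: there are exactly $k|W|$ such pairs. If every vertex $v$ were an out-neighbour of some $w\in W$ — i.e.\ $N^+(w)$ over $w\in W$ covers $V(T)$ — we'd need... not directly. Rather: the number of vertices $v$ that are out-neighbours of \emph{at least one} $w\in W$ is at most $k|W|$. So if $k|W|<n$, some $v$ lies in no $N^+(w)$, and $T-v$ works. It remains to show $n>3k^2\Rightarrow k|W|<n$. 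We have $\sum_{w\in W}d^+_T(w)=k|W|$. But also each $w\in W$, having out-degree exactly $k$ in $T$, has in-degree $n-1-k$; summing, $\sum_{w\in W}d^-_T(w)=|W|(n-1-k)$, and these in-edges come from $V(T)\setminus\{w\}$, giving $\sum_{x\notin W}d^+_T(x)\ge \sum_{x\notin W}(\text{edges from }x\text{ to }W)= |W|(n-1-k)-e(T[W])\ge |W|(n-1-k)-\binom{|W|}2$. Combined with $\sum_x d^+_T(x)=\binom n2$, this yields $k|W|+|W|(n-1-k)-\binom{|W|}2\le\binom n2$, i.e.\ $|W|(n-1)-\binom{|W|}2\le\binom n2$, which rearranges to $(n-|W|)(n-1-|W|)\ge 0$ — always true, vacuous. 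I am going in circles, so the \textbf{main obstacle} is clearly pinning down the right counting argument that converts $n>3k^2$ into $|W|<n/k$; I am fairly confident it proceeds by observing that vertices in $W$ have \emph{small} out-degree, hence the subtournament on the vertices of out-degree $\ge k+1$, call it $U=V(T)\setminus W$, satisfies: every $w\in W$ sends $\le k$ edges out, at most $|W|-1$ of which stay in $W$... and when $|W|$ is large compared to $k$ this is impossible inside the tournament $T[W]$ alone (since $T[W]$ has a vertex of out-degree $\ge(|W|-1)/2>k$ once $|W|>2k+1$, contradicting that vertex being in $W$). Hence $|W|\le 2k+1$, and then $k|W|\le k(2k+1)=2k^2+k<3k^2\le n$ (using $3k^2<n$), so the deletion argument goes through; iterating down from any starting tournament we terminate with order $\le 3k^2$. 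I would write exactly that: $|W|\le 2k$ by the averaging-inside-$W$ argument (a tournament on $>2k$ vertices has a vertex of out-degree $>k-1$, i.e.\ $\ge k$, but such a vertex cannot lie in $W$), then $k|W|\le 2k^2<3k^2$, and iterate.
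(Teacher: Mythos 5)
Your strategy --- repeatedly delete a vertex that has no in-neighbour in the set $W$ of out-degree-$k$ vertices, justified by a counting argument, and iterate until the order drops to at most $3k^2$ --- is sound and conceptually close to the paper's. The paper instead passes directly to a vertex-minimal subtournament $T'$ with $\delta^+\ge k$, observes that by minimality every vertex of $T'\setminus L$ (your $W$) has an in-neighbour in $L$, and double-counts the edges from $L$ to $T'\setminus L$ to obtain a single quadratic inequality $t-\ell\le \ell k-\binom{\ell}{2}$, whose discriminant condition gives $t\le\frac18(2k+3)^2\le 3k^2$. Your version splits the same two facts (each $w\in W$ has out-degree exactly $k$; minimality forces in-neighbours in $W$) into two separate bounds, first on $|W|$ and then on $n$ via $n\le k|W|$.

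There is, however, a genuine flaw in the step you say you would commit to paper. The claimed bound $|W|\le 2k$ is false, and the justification --- ``a tournament on $>2k$ vertices has a vertex of out-degree $\ge k$, but such a vertex cannot lie in $W$'' --- does not hold: a vertex $w\in W$ can have out-degree exactly $k$ inside $T[W]$, in which case all $k$ of its out-neighbours lie in $W$ and $d^+_T(w)=k$, so there is no contradiction. Indeed $|W|=2k+1$ is attainable (take $W$ to be a $k$-regular tournament on $2k+1$ vertices, every vertex of which is dominated by every vertex outside $W$). The correct bound is $|W|\le 2k+1$: since each $w\in W$ satisfies $d^+_{T[W]}(w)\le d^+_T(w)=k$, summing over $W$ gives $\binom{|W|}{2}=\sum_{w\in W}d^+_{T[W]}(w)\le k|W|$, hence $|W|\le 2k+1$. (Equivalently: if $|W|\ge 2k+2$ then the maximum out-degree in $T[W]$ is at least $k+1$, and \emph{that} vertex cannot lie in $W$.) Fortunately the arithmetic still closes with the corrected bound: $k|W|\le k(2k+1)=2k^2+k\le 3k^2<n$ whenever $n>3k^2$, so a deletable vertex always exists and the iteration terminates with order at most $3k^2$, as required.
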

\begin{proof}
Let $T$ be a tournament with minimum out-degree at least $k$, and let $T'$ be a vertex-minimal subtournament of $T$ 
such that $\delta^+(T') \ge k$. Denote by $L$ the collection of vertices in $T'$ with out-degree $k$ in $T'$, and let $|T'| = t$ and $|L| = \ell$.
By minimality, for every vertex $v \in T'$ we have $\delta^+(T'\setminus \{v\}) \le k - 1$. Hence, every vertex in $T' \setminus L$ has an 
in-neighbour in $L$, and so there are at least $t - \ell$ edges from $L$ to $T' \setminus L$. On the other hand, the number of such edges 
is exactly
\[
	\ell k - \binom{\ell}{2},
\]
and so $t - \ell \le \ell k - \ell^2/2 + \ell/2$. It follows that
\[
	\ell^2 - \ell(2k+3) + 2t \le 0,
\]
implying the bound $(2k+3)^2 - 8t \ge 0$. In other words, $t \le \frac{1}{8}(2k+3)^2$, so since $t$ must be an integer we get
$t \le \frac{1}{8}((2k+3)^2 - 1) = k^2/2 + 3k/2 + 1 \le 3k^2$, as required.

\end{proof}

We are now ready to prove \Cref{thm:subdivision}. In the following, for a positive integer $k$ and nonnegative integer $m \le 2\binom{k}{2}$, an $m$-\emph{partial} $\overrightarrow{K}_k$ is any spanning subdigraph
of $\overrightarrow{K}_k$ with precisely $m$ directed edges present. Our proof shows that we can find a subdivision 
of $\overrightarrow{K}_k$ by inductively finding subdivisions of $m$-partial $\overrightarrow{K}_k$'s for each $m \le 2\binom{k}{2}$. 

\begin{proof}[ of \Cref{thm:subdivision}]
For a positive integer $k$ and nonnegative integer $m \le 2\binom{k}{2}$, let $d(k, m)$ denote the smallest positive integer such that any tournament
with $\delta^+(T) \ge d(k, m)$ contains a subdivision of an $m$-partial complete directed graph on $k$ vertices.
We shall show that if $m < 2\binom{k}{2}$, then $d(k, m+1) \le 7d(k, m)^2$. We use induction on $k$, and for each fixed $k$, induction on $m$.
For $k = 1$ there is nothing to show and we can take $d(1, 0) = 1$. So let us assume $k \ge 2$ is given and that we can embed a subdivision of an $m$-partial $\overrightarrow{K}_k$ in any tournament with minimum out-degree at least $d(k, m)$, and let $T$ be a tournament with $\delta^+(T) \ge 7d(k, m)^2$.

\begin{claim}\label{claim:out_nbhd}
We may assume that there is a subdivision of an $m$-partial $\overrightarrow{K}_k$ contained in the out-neighbourhood 
of some vertex of $T$, and which spans at most $3d(k, m)^2$ vertices.
\end{claim}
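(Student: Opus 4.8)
The plan is to reduce the statement to finding a single vertex $v$ of $T$ whose out-neighbourhood induces a tournament of minimum out-degree at least $d(k,m)$. First I would record a consequence of the inductive hypothesis: if $R$ is any tournament with $\delta^+(R) \ge d(k,m)$, then by \Cref{lem:Hav_Lid} it has a subtournament $R'$ with $\delta^+(R') \ge d(k,m)$ and $|V(R')| \le 3d(k,m)^2$, and applying the inductive hypothesis to $R'$ yields a subdivision of an $m$-partial $\overrightarrow{K}_k$ inside $R'$, which therefore spans at most $3d(k,m)^2$ vertices. Consequently it is enough to produce a vertex $v$ with $\delta^+\big(T[N^+(v)]\big) \ge d(k,m)$: applying the displayed fact to the tournament $T[N^+(v)]$ then gives a subdivision of an $m$-partial $\overrightarrow{K}_k$ lying inside $N^+(v)$ and spanning at most $3d(k,m)^2$ vertices, which is exactly what the claim asserts.

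To find such a vertex $v$, I would argue by contradiction. Suppose that for every $v$ the tournament $T[N^+(v)]$ contains a vertex $w = w(v) \in N^+(v)$ with $\big| N^+(w) \cap N^+(v) \big| \le d(k,m) - 1$. Since $v \notin N^+(w)$ and $\delta^+(T) \ge 7d(k,m)^2$, such a $w$ has at least $6d(k,m)^2$ out-neighbours inside $N^-(v)$. Following the assignment $v \mapsto w(v)$ around a closed walk produces a directed cycle $u_1 \to u_2 \to \ldots \to u_\ell \to u_1$ along which $\big| N^+(u_{i+1}) \cap N^+(u_i) \big| \le d(k,m) - 1$ for every $i$; thus on consecutive indices the out-neighbourhoods $N^+(u_1), \ldots, N^+(u_\ell)$ are pairwise almost disjoint, while each has size at least $7d(k,m)^2$. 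I would first use \Cref{lem:Hav_Lid} to pass to a subtournament of $T$ of order bounded in terms of $d(k,m)$ and still having minimum out-degree at least $7d(k,m)^2$, and then try to derive a contradiction from the near-disjointness of these out-neighbourhoods; if one cannot rule the situation out outright, the fallback is to show that in it one may replace $T$ by a proper subtournament with the same degree bound and recurse, so that in either case we may assume the required vertex $v$ exists.

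The real obstacle is this last step: ruling out (or otherwise handling) the configuration in which no out-neighbourhood of $T$ induces a tournament of minimum out-degree $\ge d(k,m)$. A crude global count — of directed triangles, say — is too lossy, because the order even of a vertex-minimal subtournament of $T$ with $\delta^+ \ge 7d(k,m)^2$ is only controlled to within $O(d(k,m)^4)$ by \Cref{lem:Hav_Lid}, which dwarfs $d(k,m)^2$. One must genuinely exploit that the out-neighbourhoods along the cycle are pairwise almost disjoint, and this is also where the constant $7$ in the hypothesis $\delta^+(T) \ge 7d(k,m)^2$ — as opposed to something smaller like $3$ — should be forced.
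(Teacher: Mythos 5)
Your approach is genuinely different from the paper's, but it is built around an intermediate statement that is false, so the gap you flag at the end is not merely a missing lemma — it cannot be filled.

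You reduce the claim to proving that some vertex $v$ of $T$ satisfies $\delta^+\bigl(T[N^+(v)]\bigr) \ge d(k,m)$. This is not true in general, no matter how large $\delta^+(T)$ is. Consider the circular (rotational) tournament on $n$ vertices: $V(T) = \mathbb{Z}_n$ with $n$ odd and $i \to j$ precisely when $j - i \equiv r \pmod{n}$ for some $r \in \{1, \ldots, (n-1)/2\}$. Then $\delta^+(T) = (n-1)/2$, which can be made as large as we like, yet for every vertex $i$ the out-neighbourhood $N^+(i) = \{i+1, \ldots, i + (n-1)/2\}$ consists of cyclically consecutive vertices and so induces a \emph{transitive} subtournament. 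Hence $\delta^+\bigl(T[N^+(i)]\bigr) = 0$ for every $i$. In fact no subtournament of $T[N^+(i)]$ has positive minimum out-degree either, so even a weakening of your target — asking only that $N^+(v)$ contain \emph{some} small subtournament of large minimum out-degree — fails in the same example. Consequently the contradiction you were hoping to extract from the ``almost disjoint out-neighbourhoods along a cycle'' configuration does not exist: that configuration genuinely occurs.

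What you have not used is the ``we may assume'' formulation. The claim is not asserting that the stated vertex always exists; it asserts that \emph{either} such a vertex exists \emph{or} the inductive step is already complete, i.e.\ an $(m+1)$-partial $\overrightarrow{K}_k$ has already been embedded, in which case the bound $d(k,m+1) \le 7d(k,m)^2$ is proved and the rest of the argument is moot. The paper's proof exploits exactly this dichotomy: it first embeds an $m$-partial subdivision $K$ inside a small subtournament $T'$ (via \Cref{lem:Hav_Lid} and induction), picks a missing edge $xy$, and decomposes $T \setminus T'$ into strongly connected components $S_1 \to \cdots \to S_\ell$. If some vertex of $S_\ell$ sends an edge to $y$, the edge $xy$ can be added by routing through $T \setminus T'$, so we are done outright. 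Otherwise $S_\ell \subseteq N^+(y)$, and because $S_\ell$ is a sink component its out-degrees are large, which produces the required small subdivision inside $N^+(y)$. In the circular-tournament example above one is always in the first case, so no contradiction is ever needed — and that is precisely the escape route your proposal is missing. Any repair of your argument would have to reintroduce the option of completing the inductive step directly rather than always producing a good vertex $v$.
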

\begin{proof}
Since certainly we have $\delta^+(T) \ge d(k, m)$, by \Cref{lem:Hav_Lid} we may find a subtournament $T'$ of size at most $3d(k, m)^2$ and with
minimum out-degree at least $d(k, m)$. By induction we may embed in $T'$ a subdivision of an $m$-partial $\overrightarrow{K}_k$. Denote this subdivision by $K$. We wish to add a missing directed edge, say $xy$. In other words, we must find a directed path from $x$ to $y$ in $T$
such this path is internally disjoint from $V(K)$. Let $T'' = T\setminus T'$ and partition it into strongly connected subtournaments 
$T'' = S_1 \cup \cdots \cup S_\ell$ such that $S_i \rightarrow S_{j}$ for all $1 \le i < j \le \ell$ (unless, of course, $T''$ itself is strongly connected). Observe that since $d^+(x) \ge 7d(k, m)^2$ and
$|T'| \le 3d(k, m)^2$, we have that $x$ has an out-neighbour in $T''$. Therefore, if some vertex of $S_\ell$ is joined to $y$ we are done, as 
we can find a directed path from $x$ to $y$ outside of $T'$. So we may assume that $S_\ell \subseteq N^+(y)$.
Now, as $|T'| \le 3d(k, m)^2$ and no vertex of $S_\ell$ is joined to any vertex of $S_i$ for $i < \ell$, we have that 
\[
	\delta^+(S_\ell) \ge 7d(k, m)^2 - 3d(k, m)^2 \ge d(k, m),
\]
Applying \Cref{lem:Hav_Lid} to $S_\ell$, we find a subtournament 
$S \subseteq S_\ell$ such that $\delta^+(S) \ge d(k, m)$ and with size at most $3d(k, m)^2$. It follows by induction that we may embed a subdivision of
an $m$-partial $\overrightarrow{K}_k$ in $S$. But since $S \subseteq S_\ell \subseteq N^+(y)$ and $|S|  \le 3d(k, m)^2$, the claim holds.
\end{proof}

By \Cref{claim:out_nbhd}, choose a vertex $z$ with the smallest possible minimum out-degree satisfying the property that 
there is a subdivision of an $m$-partial $\overrightarrow{K}_k$ contained in $N^+(z)$ spanning at most 
$3d(k, m)^2$ vertices. Denote by $N$ the out-neighbourhood of $z$ and $K_z$ the subdivision with $K_z \subseteq N$.
We wish to add one more directed edge to this subdivision, say $uv$ with $u, v \in K_z$. From $N$ remove all vertices 
of $K_z$ except for $u$ and $v$ and call this set $N'$. If $T[N']$ is strongly connected then we are done; otherwise, 
partition $T[N']$ into strongly connected subtournaments, say $T[N'] = S'_1 \cup \cdots \cup S'_t$ where $S'_i \rightarrow S'_j$ 
for all $1 \le i < j \le t$. Suppose that some vertex $w \in S'_t$ is joined to a vertex $w' \in N^-(z)$. Then since there is a directed 
path $P$ from $u$ to $w$ in $T[N']$ we have that $uPww'zv$ is a directed path from $u$ to $v$ which avoids $K_z \setminus \{u, v\}$. Hence we may assume that every vertex of $N^-(z)$ dominates $S'_t$. But then, since $|K_z| \le 3d(k, m)^2$ and there are no edges from $S'_t$ to $S'_i$ for $i < t$, one has that $\delta^+(S'_t) \ge 7d(k, m)^2 - 3d(k, m)^2 = 4d(k, m)^2$. So we can repeat the argument in \Cref{claim:out_nbhd} to $S_t'$ with 
minimum out-degree $4d(k, m)^2$ instead of $7d(k, m)^2$ (observe that we need $4d(k,m)^2 - 3d(k, m)^2 \ge d(k, m)$ to hold, which is 
clearly true). Accordingly, there is a vertex $q \in S'_t$ such that $N^+(q)$ contains a subdivision of an $m$-partial $\overrightarrow{K}_k$ 
spanning at most $3d(k, m)^2$ vertices. However, since $\bigcup_{i < t}S'_i \neq \varnothing$ (as $T[N']$ is not strongly connected), and $q$ 
is not joined to any vertex of $\bigcup_{i <t}S'_i \cup N^-(z)$, we have $d^+(q) < d^+(z)$, a contradiction to the minimality of $z$. This 
completes the proof of \Cref{thm:subdivision}, as we may take $d(k) = d(k, 2\binom{k}{2})$.

\end{proof}

We now need to embed a slightly more complicated structure in $T$. In particular, we shall need to attach a few special paths to our 
subdivided complete directed graph. Say a subdivision $\mS$ is \emph{minimal} in a tournament $T$ if all of its paths have minimal length. This implies that every path in $\mS$ is \emph{backwards transitive}:  if $x_1\ldots x_t$ is a path in $\mS$ between branch vertices, then $x_ix_j \notin E(T)$ 
whenever $i \in [t-2]$ and $i < j+1$. Let $\mK_r^{\min}$ denote a minimal subdivision of a $\overrightarrow{K}_r$. Since any subdivision of $\overrightarrow{K}_r$ contains a minimal subdivision, \Cref{thm:subdivision} allows us to find a $\mK_r^{\min}$ in tournaments with 
sufficiently large out-degree. If $U$ denotes the set of branch vertices of this subdivision, then for every $u, v \in U$, 
$\mK_r^{\min}$ consists of directed paths $P_{uv}, P_{vu}$ going from $u$ to $v$ and from $v$ to $u$, respectively. Since $T$ is a tournament and $\mK_r^{\min}$ is minimal, precisely one of these paths is a directed edge.

Now we define our augmented subdivision, denoted by $\mK_r^*$, as follows.
Let $\mK$ denote a copy of $\mK_r^{\min}$ in $T$. The \emph{branch vertices} of $\mK_r^*$ are precisely the branch vertices of $\mK$; denote this set 
by $U$. We form $\mK_r^*$ by adding a collection $\mathcal{L}$ of special `loop' paths in the following manner. For each pair $u, v \in U$,
 if, say, $P_{uv}$ is the path between $u$ and $v$ in $\mK$ of length at least two, then each of $u$ and $v$ 
 has an associated directed path from $\mathcal{L}$: one directed path $L_{uv}^u$ going from the second vertex of $P_{uv}$ to $u$, and another directed path $L_{uv}^v$ going from $v$ to the penultimate vertex of $P_{uv}$; we require that these paths are internally disjoint from $V(\mK)$. We also impose that the paths in $\mL$ are minimal and hence backwards transitive. 
 For $u \in U$, we let $\mathcal{L}_u$ denote the collection of paths in $\mathcal{L}$ which contain $u$.
Note that $\mK_r^*$ and $\mK_r^{\min}$ really denote \emph{families} of subdigraphs which depend on the underlying tournament $T$. When we speak of `a $\mK_r^*$' we really mean `a member of $\mK_r^*$ in $T$'; we hope this usage of notation does not cause confusion, but we think 
that it is simpler.
Now the proof of the existence of a $\mK_{r}^{*}$ follows exactly in the same way as the proof of  \Cref{thm:subdivision}, namely by induction on the number of `loops'. We state it as a corollary and provide only a sketch of the proof.

\begin{cor}\label{cor:subdivision2}
For any positive integer $k$ there exists a $d^{*}(k)$ such that 
the following holds. If $T$ is a tournament with $\delta^+(T) \ge d^*(k)$, then
$T$ contains a $\mK^*_k$.
\end{cor}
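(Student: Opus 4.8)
The plan is to imitate the proof of \Cref{thm:subdivision} almost verbatim, carrying out a double induction: the outer induction is on $r$ (running \Cref{thm:subdivision}, or rather its proof, to produce a $\mK_r^{\min}$ as the base object), and the inner induction is on the number of loop paths already attached. Concretely, for a target number $L = L(r)$ of loop paths (there is one pair $L_{uv}^u, L_{uv}^v$ for each of the $\binom{r}{2}$ edges, so $L(r) = 2\binom{r}{2}$), define $d^*(r,j)$ to be the smallest integer such that every tournament with minimum out-degree at least $d^*(r,j)$ contains a copy of $\mK_r^{\min}$ together with $j$ of the prescribed loop paths attached (all paths minimal, hence backwards transitive, and the loop paths internally disjoint from the rest of the structure). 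I would first set $d^*(r,0) = d(r)$, the bound from \Cref{thm:subdivision}, using that any subdivision of $\overrightarrow{K}_r$ contains a minimal one. Then I would prove the recursion $d^*(r,j+1) \le 7\,d^*(r,j)^2$ (or any fixed polynomial bound — the exact constant is immaterial), exactly as in the proof of \Cref{thm:subdivision}; setting $d^*(k) = d^*(k, 2\binom{k}{2})$ finishes the corollary, and tracking the recursion gives a bound of the shape $d^*(k) \le 2^{2^{Ck^2}}$.

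The core of the argument is the step that adds one loop path. Suppose we have a structure $\MH$ (a $\mK_r^{\min}$ with $j$ loops already attached) spanning at most, say, $3 d^*(r,j)^2$ vertices inside the out-neighbourhood of some vertex; as in \Cref{claim:out_nbhd} one first argues, using \Cref{lem:Hav_Lid} and the strongly-connected decomposition of the complement, that we may assume such a structure sits inside $N^+(z)$ for an appropriate $z$, and then one chooses $z$ minimizing $d^+(z)$ among all vertices admitting such a structure in their out-neighbourhood. We now wish to attach, say, the loop path $L_{uv}^u$ from the second vertex $w$ of $P_{uv}$ back to $u$ (the branch vertex), internally disjoint from $\MH$. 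Delete from $N := N^+(z)$ all vertices of $\MH$ except $w$ and $u$, obtaining $N'$. If $T[N']$ is strongly connected we immediately get the desired path inside $N'$; otherwise take the strongly connected decomposition $T[N'] = S_1' \cup \cdots \cup S_t'$ with $S_i' \to S_j'$ for $i<j$. If some vertex of $S_t'$ sends an edge into $N^-(z)$, we route the loop path as $w \to (\text{path in } S_t') \to (\text{into } N^-(z)) \to z \to u$, which avoids $\MH \setminus \{w,u\}$; otherwise $N^-(z) \to S_t'$, so $\delta^+(S_t') \ge 7 d^*(r,j)^2 - 3 d^*(r,j)^2 = 4 d^*(r,j)^2 \ge d^*(r,j)$, and we repeat the Claim-style argument inside $S_t'$ to find a vertex $q \in S_t'$ whose out-neighbourhood contains a fresh copy of $\MH$ on at most $3 d^*(r,j)^2$ vertices; since $q$ is joined to nothing in $\bigcup_{i<t} S_i' \cup N^-(z)$ (nonempty because $T[N']$ is not strongly connected and $N^-(z) \neq \varnothing$ — note $z$ has positive in-degree since the structure lives in $N^+(z)$ and $T$ is large), we get $d^+(q) < d^+(z)$, contradicting minimality of $z$. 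One subtlety worth a sentence: the loop path we produce must be taken \emph{minimal}, hence backwards transitive — but any directed path between two fixed vertices contains a minimal one, so we may simply shorten whatever path we found, and minimality of the loop path does not interfere with internal disjointness from $\MH$.

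The step I expect to be the main (though modest) obstacle is purely bookkeeping: making sure that when we re-run the \Cref{claim:out_nbhd} argument to refind $\MH$ inside a sub-tournament, the refound copy really is a \emph{valid partial $\mK_r^*$} — that is, its $j$ loop paths are still internally disjoint from its own $\mK_r^{\min}$ and all paths are still minimal — which is automatic since we are embedding an abstract structure (a $j$-partial $\mK_r^*$) and \Cref{lem:Hav_Lid} plus induction hand us exactly such an embedding, with no external vertices forced in. There is also a trivial edge case: the branch vertex $u$ and the endpoint $w$ must both survive into $N'$, which they do by construction since we only delete the \emph{other} vertices of $\MH$; and we must attach the two loop paths $L_{uv}^u$ and $L_{uv}^v$ associated to a given long path $P_{uv}$ as two separate steps of the inner induction, each handled identically. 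Since none of this changes the recursion $d^*(r,j+1) \le 7 d^*(r,j)^2$, we conclude by taking $d^*(k) = d^*\!\left(k, 2\binom{k}{2}\right)$.
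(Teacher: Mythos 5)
Your proposal is correct and follows essentially the same approach as the paper, which itself only gives a sketch: define $d^*(k,j)$ for $j$-partial $\mK^*_k$'s, seed the induction with $d^*(k,0)=d(k)$ from \Cref{thm:subdivision} (any subdivision can be pruned to a minimal one), and prove $d^*(k,j+1)\le 7d^*(k,j)^2$ by rerunning the argument of \Cref{thm:subdivision} with a loop path in place of a missing edge of the subdivision. The details you supply — keeping only $w$ and $u$ from $\MH$ when forming $N'$, routing through $S_t'\to N^-(z)\to z\to u$, shortening the found path to make it minimal, and the contradiction $d^+(q)<d^+(z)$ from $\bigcup_{i<t}S_i'\neq\varnothing$ — are exactly what is implicit in the paper's ``the same proof used to show \Cref{thm:subdivision} gives that we may attach one more loop path.'' (One small remark: your parenthetical justifying $N^-(z)\neq\varnothing$ is unnecessary; the strict inequality $d^+(q)<d^+(z)$ already follows solely from $\bigcup_{i<t}S_i'$ being nonempty, since $N^+(q)\subseteq N^+(z)\setminus\bigcup_{i<t}S_i'$ once $N^-(z)$ dominates $S_t'$.)
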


\begin{proof}[ (Sketch)]

Similarly as in \Cref{thm:subdivision}, for a positive integer $k$ and nonnegative integer $m \le 2\binom{k}{2}$, an $m$-\emph{partial} $\mK^*_k$ is any minimal subdivision of $\overrightarrow{K}_k$ with precisely $m$ loop paths present. Let 
 $d^*(k, m)$ denote the smallest positive integer such that any tournament
with $\delta^+(T) \ge d^*(k, m)$ contains a subdivision of an $m$-partial $\mK^*_k$.
We show, as before, that if $m < 2\binom{k}{2}$, then $d^*(k, m+1) \le 7d^*(k, m)^2$.
For $k = 1$ there is nothing to show and we can take $d^*(1, 0) = 1$. So assume $k \ge 2$ is given. Then $d^*(2, 0)$ exists by \Cref{thm:subdivision} (i.e., we can embed a subdivision of $\overrightarrow{K}_2$ which contains a minimal such subdivision).  Thus let $m \ge 1$ and suppose we can embed an $m$-partial $\mK^*_k$ in any tournament with minimum out-degree at least $d^*(k, m)$. Let $T$ be a tournament with $\delta^+(T) \ge 7d^*(k, m)^2$. Then the same proof used to show \Cref{thm:subdivision} gives that we may attach one more loop path, which we may assume has minimal length. Therefore we can embed an $(m+1)$-partial $\mK_k^*$ in $T$, as claimed.
\end{proof}

\section{Proof of the main theorem}\label{sec:main}

In this section we finish the proof of \Cref{thm:main}. The structure of the proof is as follows. First, assuming the 
minimum degree of our tournament is sufficiently large, we shall embed in $T$ a copy $\mS$ of $\mK_{r}^*$ where $r = r(k)$ is 
sufficiently large. If $x_1, \ldots, x_k$, $y_1, \ldots , y_k$ are the vertices we want to link, then we shall show that there exists
a collection of $k$ directed paths going from the $x_i$'s to the branch vertices of $\mS$, and a collection of $k$ directed paths going from the branch vertices 
of $\mS$ to the $y_i$'s, all of these paths being pairwise vertex disjoint. Here we only use the assumption that $T$ is $4k$-connected (see \Cref{lem:reversing_paths} below). Finally, we show that, provided one chooses these paths appropriately, one can link each $x_i$ to $y_i$ 
by rerouting the paths through $\mS$. The rerouting step is more complicated than one might expect, and we shall see that we do 
need the slightly richer structure $\mK_r^*$ rather than just a subdivided complete directed graph.

We need a small bit of terminology first before proceeding.
If $X$ and $Y$ are two disjoint sets of vertices in a directed graph, then
we say that there is an \emph{out-matching} (resp., \emph{in-matching}) of $X$ to $Y$ if there is a matching from 
$X$ into $Y$ such that all matching edges are directed from $X$ to $Y$ (resp., directed from $Y$ to $X$).

\begin{lem}\label{lem:reversing_paths}
Let $T$ be a $4k$-connected tournament. Suppose $A, B \subset V(T)$ are two 
disjoint subsets of size $k$, and let $L \subset V(T)$ be a set of $4k$ vertices disjoint from $A \cup B$.
Then there are $k$ directed 
paths from $A$ to $L$, and $k$ directed paths from $L$ to $B$, all these paths pairwise vertex 
disjoint and internally disjoint from $L$.
\end{lem}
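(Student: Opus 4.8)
\textbf{Proof proposal for \Cref{lem:reversing_paths}.}

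The plan is to reduce the statement to a single application of Menger's theorem in an auxiliary digraph, after first dealing with the orientation constraint. Note that the only subtlety is that we want the $k$ paths from $A$ to $L$ \emph{and} the $k$ paths from $L$ to $B$ to be simultaneously vertex disjoint and internally disjoint from $L$; if we did not care about disjointness between the two families, each family would follow immediately from $4k \geq k$-connectivity and Menger. So I would build one flow network that captures both families at once. Concretely, split $L$ into its $4k$ vertices and think of each $\ell \in L$ as being usable either as the endpoint of an $A$--$L$ path, or as the start of an $L$--$B$ path, but not both; since $|L| = 4k \geq 2k$, there is room for $k$ of each. Formally, introduce a super-source $s$ with arcs to every vertex of $A$, a super-sink $t$ with arcs from every vertex of $B$, and route through $L$: I would make two copies of the vertex set where the ``first copy'' handles the $A\to L$ portion and the ``second copy'' handles the $L \to B$ portion, gluing them only at the vertices of $L$ with capacity $1$ on each $L$-vertex so that each $\ell$ is traversed at most once in total. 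Then an integral flow of value $2k$ from $s$ to $t$ decomposes into $2k$ internally disjoint $s$--$t$ paths, each of which passes through exactly one vertex of $L$; the initial segments give $A$--$L$ paths and the terminal segments give $L$--$B$ paths, and vertex-disjointness across the two families is exactly what the capacity-$1$ constraint on $L$ enforces. Here I use that $T$ is $4k$-connected to guarantee enough internally-vertex-disjoint connectivity in $T \setminus (A \cup B)$ — more carefully, one removes at most $2k$ vertices total from $A \cup B$ as endpoints and still has $4k - 2k = 2k$ connectivity to push the required flow.

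For the orientation issue, I would \emph{not} try to force every edge of the paths to point the ``right'' way — that is false in general and is exactly why \Cref{cor:subdivision2} is needed elsewhere — rather, the directed paths here may use edges in either direction relative to $A$, $L$, $B$; the statement only asks for directed paths \emph{from} $A$ \emph{to} $L$ and \emph{from} $L$ \emph{to} $B$ as paths, which is automatic since we are working with directed paths in $T$ from the outset. So the auxiliary digraph is simply $T$ itself (restricted appropriately and with the source/sink/split gadget added), with all arcs kept in their given orientation.

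The one real obstacle is verifying the connectivity bound in the split network — i.e., that Menger in the gadget actually yields a flow of value $2k$. I would argue by contradiction: a separator of size less than $2k$ in the gadget would, after accounting for the (at most) $k$ source-incident and $k$ sink-incident ``trivial'' cut choices, pull back to a vertex cut of size less than $2k$ separating $A$ from $B$ in $T$, or alternatively a cut of size less than $k$ isolating part of $L$, either of which contradicts $4k$-connectivity of $T$ since $4k > 2k$. Spelling this out requires a short but careful case analysis of where a minimum cut in the gadget can sit (on an $A$-vertex, a $B$-vertex, one of the two copies of an internal vertex, or an $L$-vertex); I expect that to be the bulk of the write-up, and everything else — the flow decomposition, extracting the two path families, checking internal disjointness from $L$ — is routine.
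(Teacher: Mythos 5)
There is a genuine gap, and it is fatal to the approach. Your gadget glues the two copies of the vertex set only along $L$, so every $s$--$t$ path has the shape $s \to a \to \cdots \to \ell \to \cdots \to b \to t$, passing through a single $\ell \in L$. When you split such a path at $\ell$, the resulting $A$--$L$ segment and $L$--$B$ segment \emph{share} the vertex $\ell$; but the lemma requires all $2k$ paths to be \emph{pairwise} vertex disjoint, so in particular every $A$--$L$ path must avoid every $L$--$B$ path entirely, including at the $L$-end. The capacity-$1$ constraint on $\ell$ only prevents two distinct flow paths from reusing $\ell$; it does nothing to separate the two halves of the \emph{same} flow path. (There is also a smaller bookkeeping error: in your gadget $s$ has exactly $k$ out-arcs, to $A$, so the max flow is at most $k$, not $2k$.) If you instead try to make the two families genuinely distinct $s$--$t$ paths by adding arcs $\ell \to t$ and $s \to \ell$, then the two families live in the two copies independently, and forcing them not to use the same underlying vertex of $T$ is a \emph{joint} capacity constraint linking the copies --- this is a two-commodity/linkage-type condition, not a single-commodity flow, and Menger does not decide it.

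There is a cleaner reason this route cannot work: your argument would establish the lemma assuming only $2k$-connectivity (you never use more than $2k$ disjoint paths in the cut analysis). But as the authors note in \Cref{sec:final}, the lemma is \emph{false} for $(3k-1)$-connected tournaments --- there is an explicit construction with a $(2k-1)$-vertex bottleneck $S$ through which every usable $A$--$L$ path and every usable $L$--$B$ path must pass. So any proof has to exploit connectivity strictly beyond $2k$, and in fact beyond $3k-1$. The paper's proof does this by a much more delicate argument: it first builds auxiliary matched sets $W_A$, $W_B$ of vertices with many out-/in-neighbours in $L$, then chooses the two path families with a lexicographic minimization, and proves (\Cref{claim:paths_disjoint}) via a rerouting/exchange argument that the two families are automatically disjoint. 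Nothing analogous to these steps appears in your proposal, and the ``short case analysis of where a minimum cut can sit,'' which you defer, is precisely where the approach breaks down: the pull-back from a cut in the gadget to a vertex cut in $T$ that contradicts connectivity does not go through, because the orientation asymmetry between $A\to L$ and $L\to B$ paths is exactly what lets the $(3k-1)$-connected counterexample exist.
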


\begin{proof}

Choose two disjoint subsets $W_A, W_B$ disjoint from $A \cup B\cup L$ with maximum size 
subject to the following properties:
\begin{itemize}
\item Every vertex in $W_A$ has at least $2k$ out-neighbours in $L$, and every vertex in
	$W_B$ has at least $2k$ in-neighbours in $L$.
	
\item There is an in-matching $\mM_A$ from $W_A$ to $A$, and an out-matching $\mM_B$ from $W_B$ to $B$.

\end{itemize}
We shall assume, without loss of generality, that $|W_A| \le |W_B|$. Let $A'$ denote the set of $|W_A|$ vertices in $A$
that are incident with an edge of $\mM_A$, and let $A'' = A \setminus A'$. Let $B', B''$ denote the analogous sets of vertices in $B$.
 As $T$ is $4k$-connected, we can find 
pairwise vertex disjoint directed paths from some $k - |W_B|$ vertices of $L$ to $B''$ avoiding $A \cup W_A \cup B' \cup W_B$. 
Choose a collection of 
such paths $\mP$ which minimizes $|\bigcup \mP|$, and subject to that, maximizes the number of paths whose
second vertex has at least $2k$ in-neighbours in $L$. Partition $\mP$ into sets $\mP', \mP''$ where the former 
denotes the collection of paths in $\mP$ whose second vertex has at least $2k$ in-neighbours in $L$, and the latter denotes the collection of remaining 
paths. Denote by $X'$ the set of all second and third vertices on paths in $\mP'$, and denote by $X''$ the set of all first and second vertices 
on paths in $\mP''$. Consider the set $Y:= A' \cup W_A \cup X' \cup X'' \cup B \cup W_B$ and note that we can bound the size of $Y$ as
\[
	|Y| \le 2|W_A| + 3(k - |W_B|) + 2|W_B|.
\]
We now find $k - |W_A|$ disjoint directed paths from the vertices in $A''$ to some subset of $L$, avoiding $Y$.
This is possible since $T$ is $4k$-connected and 
\begin{align*}
4k - |Y| &\ge 4k - (2|W_A| + 3(k - |W_B|) + 2|W_B|) \\
&= k - 2|W_A| + |W_B| \ge k - |W_A|,
\end{align*}
where the last inequality holds since we are assuming that $|W_A| \le |W_B|$. Therefore, choose a collection $\mQ$ of pairwise disjoint directed paths 
from $A''$ to $L$ avoiding $Y$ with $|\bigcup \mQ|$ as small as possible.
We claim that these new paths do not intersect any path from $\mP$:
\begin{claim}\label{claim:paths_disjoint}
No path from $\mQ$ intersects a path from $\mP$.
\end{claim}
\begin{proof}
Suppose that some path $Q \in \mQ$ intersects a path $P \in \mP$. Let $P = x_1 \ldots x_s$ and $Q = y_1\ldots y_t$, 
and let $L_A = (\bigcup \mQ) \cap L$ and similarly $L_B = (\bigcup \mP) \cap L$. 
We consider two cases, 
according to whether $P \in \mP'$ or $P \in \mP''$. Suppose first the former holds, and let $y_i$ ($i\ge 2$) be the 
first vertex of $Q$ that intersects $P$. We may assume that $y_i \neq x_1$; indeed, if $y_i = x_1$, then $|L_A \cup L_B| \le 2k -1$, 
and since $P \in \mP'$, we have that $x_2$ has at least $2k$ in-neighbours in $L$. Therefore, we may choose some in-neighbour 
$x'$ disjoint from $L_A \cup L_B$ and replace $P$ with $P' := x'x_2\ldots x_s$. Moreover, since the paths in $\mQ$ 
avoid $\{x_2, x_3\}$, we may assume that $y_i = x_j$, for some $j \ge 4$. Consider $y_{i-1}$ and pick any vertex $z \in L \setminus (L_A \cup L_B)$. 
If $y_{i-1}z \in E(T)$, then we may replace $Q$ with the shorter directed path $y_1\ldots y_{i-1}z$, contradicting the minimality of 
$|\bigcup \mQ|$. So we have $zy_{i-1} \in E(T)$. But then as long as $i \ge 3$ we may replace $P$ with the shorter path $zy_{i-1}x_j\ldots x_s$, 
contradicting the initial minimal choice of $|\bigcup \mP|$. It remains to consider when $i = 2$. In this case,
$zy_2 \notin E(T)$ for every $z \in L\setminus (L_A \cup L_B)$, since otherwise we can replace $P$ with a shorter 
directed path. Thus $y_2$ has at least $2k$ out-neighbours in $L$, and we can add $y_1y_2$ to the matching 
$\mM_A$, a contradiction to the maximality of this matching. It follows that $P \cap Q = \varnothing$ for $P \in \mP'$.

So let us assume that $P \in \mP''$. Since the paths in $\mQ$ avoid $\{x_1, x_2\}$, we may assume in this case 
that $y_i = x_j$ for some $j \ge 3$. The same argument as in the previous paragraph shows that we may assume $i\ge 3$ (otherwise, we obtain a larger matching
than 
$\mM_A$). Also, as before, if $z \in L \setminus (L_A \cup L_B)$, then $y_{i-1}z \notin E(T)$; otherwise we can replace 
$Q$ with the shorter path $y_1\ldots y_{i-1}z$. Hence $y_{i-1}$ has at least $|L| - |L_A \cup L_B| \ge 2k$ in-neighbours 
in $L$. Choose one of these in-neighbours $u$ (disjoint from $L_A \cup L_B$) and consider the path $P^* := uy_{i-1}x_j\ldots x_s$.
Then $P^*$ has the same length as $P$ and its second vertex has at least $2k$ in-neighbours in $L$, so we could replace 
$P$ with $P^*$, contradicting the maximality of $\mP'$. Therefore, we must have $P \cap Q = \varnothing$, and the proof of 
\Cref{claim:paths_disjoint} is complete.

\end{proof}

Armed with \Cref{claim:paths_disjoint}, the proof of \Cref{lem:reversing_paths} is essentially complete.
Indeed, every vertex in $W_A$ has at least $2k$ out-neighbours in $L$, and so each of these vertices has at least
\[
	2k - |L_A \cup L_B| = |W_A| + |W_B|,
\]
out-neighbours in $L\setminus (L_A \cup L_B)$. So for each vertex in $W_A$ we may select a distinct out-neighbour 
in $L \setminus (L_A \cup L_B)$. Then every vertex in $W_B$ has at least $|W_B|$ in-neighbours from the remaining 
vertices of $L$, so we can pick a distinct in-neighbour for every vertex of $W_B$. The paths of length $2$ using vertices of $W_A \cup W_B$ 
together with $\mP$ and $\mQ$ form the required collection of paths.
\end{proof}

We can now finish the proof of \Cref{thm:main}.

\begin{proof}[ of \Cref{thm:main}]

Let $k \ge 2$ be an integer and let $f(k) := d^*(12k^2) + 2k$, where $d^*: \mathbb{N} \rightarrow \mathbb{N}$ is the function provided by \Cref{cor:subdivision2}. Suppose that $T$ is a $4k$-connected tournament with minimum out-degree at least $f(k)$, and let $X = \{x_1, \ldots, x_k\}$, 
$Y = \{y_1, \ldots, y_k\}$ be two disjoint $k$-sets of vertices. We wish to find pairwise vertex disjoint directed
 paths going from $x_i$ to $y_i$ for each $i \in [k]$. Remove $X\cup Y$ from $T$; the tournament induced on $V(T) \setminus (X \cup Y)$ 
 has minimum out-degree at least $d^*(12k^2)$, so by \Cref{cor:subdivision2} we may embed in $T$ a $\mK_{12k^2}^*$ disjoint 
 from $X \cup Y$. Denote this subdivision by $\mS$. We shall use the same notation as in \Cref{sec:subdivision}, namely, $U$ denotes the 
 branch vertices of $\mS$, $\mK$ denotes the underlying minimal subdivision of $\overrightarrow{K}_{12k^2}$ composed of minimal paths $P_{uv}, P_{vu}$ 
 for every pair of branch vertices $u, v \in U$, and $\mL$ denotes the collection of minimal paths attached to $\mK$. A \emph{path of} $\mS$ refers to any path $P_{uv}$ between branch vertices of length at least $2$, and any member of $\mL$. Furthermore, we consider the following edges to belong to the structure $\mS$:

\begin{itemize}

\item The edges belonging to paths in $\mathcal{K}$, except the paths of length one.

\item The edges belonging to paths in $\mathcal{L}$.

\item For every pair $u, v \in U$, every edge in $T$ between $\{u, v\}$
	and $V(P_{uv})\cup V(P_{vu})$.

\item For every $u \in U$, every edge in $T$ between $u$ and $\bigcup \mathcal{L}_u$.

\end{itemize}

We denote the set of edges of $\mS$ by $E(\mS)$. For example, whenever we speak of distances in $\mS$, we insist that they are computed using only these directed edges. 
Let $\mP$ and $\mQ$ be any two collections of pairwise disjoint directed paths such that every path 
 in $\mP$ goes from $U$ to $Y$, every path in $\mQ$ goes from $X$ to $U$, and all of these paths are internally 
 vertex disjoint from $U$; by \Cref{lem:reversing_paths}, such collections exist.
 We say that a pair $(u, x) \in U \times V(\mS)$ is at \emph{in-distance} $d$ \emph{in} $\mS$ if $d$ is the smallest integer such that there is a directed path $P'$ of length $d$ using only edges of $\mS$, and 
 such that $P'$ goes from $u$ to $x$. We shall also sometimes say that $x$ has in-distance $d$ in $\mS$ from $u$. 
 Similarly, we say that $(u, x) \in U \times V(\mS)$ is at \emph{out-distance} $d$ \emph{in} $\mS$ if $d$ is the smallest integer such that 
  there is a directed path $Q'$ of length $d$ using only edges of $\mS$, and such that $Q'$ goes from $x$ to $u$ in $\mS$; we shall also sometimes say that 
  $x$ has out-distance $d$ in $\mS$ from $u$. We denote in-distance by $d^{\text{in}}(u, x)$ and out-distance by $d^{\text{out}}(u, x)$ (where we have 
  suppressed the dependence on $\mS$).

 \begin{obs}\label{obs:distances}
 Let $x \in V(\mS) \setminus U$. Then $x$ is at in-distance (or out-distance) at least $3$ from every vertex of $U$, except 
 possibly the branch vertex (or vertices) belonging to the path of $\mS$ containing $x$. 
  \end{obs}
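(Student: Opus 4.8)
The plan is to reduce the claim to two elementary facts about the digraph $(V(\mS),E(\mS))$, each of which is read off directly from the four families of edges defining $\mS$: \emph{(i)} no edge of $E(\mS)$ has both endpoints in $U$, and whenever $wy\in E(\mS)$ (in either orientation) with $w\in U$, the endpoint $y$ lies outside $U$ and on some path of $\mS$ having $w$ as a branch endpoint; and \emph{(ii)} if $wy\in E(\mS)$ with $w,y\in V(\mS)\setminus U$, then $wy$ is an edge of one of the paths constituting $\mS$. Fact (i) holds because an edge of a path of $\mK$ or of $\mL$ meets a branch vertex $u$ only at the first or last edge of a path of $\mS$ incident to $u$ (so its other endpoint is a non-branch interior vertex of that path), while an edge of the third type joins $\{u,v\}$ to an interior vertex of $P_{uv}$ or $P_{vu}$ and an edge of the fourth type joins $u$ to a vertex of $\bigcup\mL_u$; in particular every edge of $E(\mS)$ has at most one endpoint in $U$. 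Fact (ii) holds because the edges of the third and fourth types each have an endpoint in $U$, so an $E(\mS)$-edge between two non-branch vertices must come from a constituent path of $\mS$.

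Granting these, I would argue as follows. Fix $x\in V(\mS)\setminus U$, let $u'\in U$ be a branch vertex that is not one of the exceptional vertices in the statement, and suppose for contradiction that $d^{\mathrm{in}}(u',x)\le 2$; take a shortest directed $\mS$-path from $u'$ to $x$. It cannot have length $1$: if $u'\to x\in E(\mS)$, then fact (i) places $x$ on a path of $\mS$ with branch endpoint $u'$, contradicting the choice of $u'$. So the path has length $2$, say $u'\to w\to x$. Applying fact (i) to $u'\to w$ gives $w\notin U$ together with a path $\pi'$ of $\mS$ through $w$ having $u'$ as a branch endpoint; applying fact (ii) to $w\to x$ gives a constituent path $\pi$ of $\mS$ with $wx\in E(\pi)$, so $x\in V(\pi)$. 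If $\pi=\pi'$ we are done, since then $u'$ is a branch endpoint of a path through $x$. Otherwise the non-branch vertex $w$ lies on two distinct constituent paths of $\mS$, so by the way $\mS$ is built it must be the second or penultimate vertex of some branch path $P_{uv}$, doubling as an endpoint of a loop attached to $P_{uv}$; tracing through the possibilities for $\pi$ and $\pi'$ then forces $u'\in\{u,v\}$ and $x$ to lie on $P_{uv}$ or on that loop, so $u'$ is not a ``distant'' branch vertex after all. The out-distance statement is obtained by the symmetric argument, interchanging heads and tails throughout.

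I expect the only delicate point to be the last case of the length-$2$ analysis, where a loop path overlaps the branch path it is attached to in a single non-branch vertex; this overlap is exactly the reason one must count \emph{both} endpoints of the underlying branch path among the exceptions when $x$ happens to lie on such a loop, so that ``the path of $\mS$ containing $x$'' is best read as ``the branch path associated with $x$'' in that situation. Everything else is a routine inspection of the four kinds of edges in $E(\mS)$, and no numerical hypothesis on $r$ or $k$ is used.
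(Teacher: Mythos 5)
Your proof is correct and rests on the same underlying idea as the paper's: any short directed walk in $\mS$ leaving a branch vertex must remain on constituent paths incident to that vertex, because no $\mS$-edge joins two branch vertices and any $\mS$-edge between non-branch vertices is a constituent path edge. Your facts (i)--(ii) simply make explicit what the paper's one-line argument (``we must first reach either $u$ or $v$'') takes for granted, and you correctly flag the point that both $u$ and $v$ must count as exceptional even when $x$ lies only on a loop $L_{uv}^{u}$, which is precisely how the paper's proof reads the statement by choosing $w \in U \setminus \{u,v\}$.
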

  \begin{proof}
  If $x \in V(\mS) \setminus U$, then either $x \in P_{uv}$ for some $u, v \in U$ or $x \in L_{uv}^u \in \mL_u$ (or possibly both).
 Let $w \in U\setminus \{u, v\}$. In order to get from $w$ to $x$ using only edges of $\mS$, we must first reach either $u$ or $v$. However, recall that the single edge paths in $\mK$ are \emph{not} edges of $\mS$, so the path from $w$ to $u$ or $v$ in $\mS$ has length at least $2$. Therefore, 
 $x$ has in-distance at least $3$ from $w$, as required. A symmetric argument shows that the observation remains true with `out-distance' instead of `in-distance'.

  \end{proof}

 In the following, we shall always assume that any family $\mF$ of directed paths in $T$ between $X \cup Y$ and $U$ are internally disjoint from $U$. We also denote by $U_{\mF}$ the set $U \cap (\bigcup \mF)$. Our first claim asserts that we may assume the paths in one of the collections $\mP$, $\mQ$ contains few vertices which are `close' in $\mS$ to a vertex in $U$. To state it precisely, we say that a vertex $u \in U \setminus U_{\mP}$ is \emph{in-close} to a subset $S$ of vertices if $d^{\text{in}}(u, x) \le 2$ for some $x \in S$. Similarly, we say that $u \in U \setminus U_{\mQ}$ is \emph{out-close} to $S$ provided $d^{\text{out}}(u, x) \le 2$ for some $x \in S$.
 
 \begin{lem}\label{lem:branch}
We may choose either $\mP$ or $\mQ$ such that there are at most $8k^2+4k$ vertices in $U \setminus U_{\mP}$ (resp., $U \setminus U_{\mQ}$) that are in-close to $\bigcup \mP\setminus U_{\mP}$ (resp., out-close to $\bigcup \mQ\setminus U_{\mQ}$).
 \end{lem}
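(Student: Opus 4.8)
The plan is to set up a counting/exchange argument. Fix the structure $\mS = \mK_{12k^2}^*$ with branch vertex set $U$, $|U| = 12k^2$. Given the collections $\mP$ (paths $U \to Y$) and $\mQ$ (paths $X \to U$) guaranteed by \Cref{lem:reversing_paths}, each uses exactly $k$ branch vertices, so $|U_{\mP}| = |U_{\mQ}| = k$ and at least $12k^2 - 2k$ branch vertices lie outside $U_{\mP} \cup U_{\mQ}$. First I would observe, using \Cref{obs:distances}, that if a vertex $u \in U$ is in-close to some $x \in \bigcup\mP \setminus U_{\mP}$, i.e. $d^{\mathrm{in}}(u,x) \le 2$, then $x$ must lie on one of the (at most two) paths of $\mS$ incident to $u$ — there are only a bounded number of such paths per branch vertex, hence only a bounded number of $\mS$-vertices within in-distance $2$ of any given $u$. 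So ``$u$ is in-close to $\bigcup\mP$'' forces a path of $\mP$ to intersect $V(\mS)$ near $u$ in a controlled way; dually for out-close and $\mQ$.

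The core of the argument is a rerouting/minimality trick analogous to the one in \Cref{lem:reversing_paths}: I would choose $\mP$ and $\mQ$ to minimize $|\bigcup\mP \cup \bigcup\mQ|$ (or some lexicographic refinement thereof) among all valid choices, and then argue that if \emph{both} ``too many $u$ in-close to $\bigcup\mP$'' and ``too many $u$ out-close to $\bigcup\mQ$'' held simultaneously, we could shortcut a path through $\mS$ and through the offending branch vertices to reduce the total size — contradiction. Concretely, if $u \notin U_{\mP}$ is in-close to $x \in P \in \mP$ via a short $\mS$-path from $u$ to $x$, and $P$ goes from some $u' \in U$ to a target $y_i$, then replacing the $u' \to x$ portion of $P$ by the short route $u \to x$ inside $\mS$ (possible since $u$ is unused by $\mP$) yields a shorter path still ending at $y_i$ but now starting at $u$; the obstruction is ensuring the new path stays internally disjoint from $U$ and from the rest of $\mP \cup \mQ$, which is where \Cref{obs:distances} and the abundance of branch vertices ($12k^2 \gg k$) are used to absorb collisions. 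The bound $8k^2 + 4k$ should come out of a double-counting: each of the $k$ paths in $\mP$ passes through $V(\mS)$ along a bounded number of segments, and each segment can make only $O(k)$ branch vertices in-close, while the ``$+4k$'' accounts for branch vertices that are in-close through an endpoint in $X \cup Y$ adjacent region or through $U_{\mP}$ itself.

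The main obstacle I anticipate is the bookkeeping in the rerouting step: when we shortcut a path of $\mP$ through $\mS$ to start at a new branch vertex $u$, the new path may run through edges of $\mS$ that are themselves being used (or wanted) by the eventual linking, or may hit $U$ internally, violating the standing convention that these families are internally disjoint from $U$. Handling this cleanly — probably by only shortcutting along a single path $P_{uv}$ of $\mS$ and invoking \Cref{obs:distances} to guarantee the shortcut route touches $U$ only at its endpoint — and then extracting the numerical bound $8k^2+4k$ from ``$k$ paths $\times$ $O(k)$ branch vertices made close per path, plus $O(k)$ slack'' is the delicate part. Once the exchange argument shows one of the two badness conditions must fail for the optimal choice, we are done: pick whichever of $\mP$, $\mQ$ satisfies the bound.
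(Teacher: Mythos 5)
Your proposal correctly identifies the main ingredients — rerouting via short $\mS$-paths, pigeonhole over the $k$ paths, and using \Cref{obs:distances} to bound how many branch vertices can be close to a single vertex — but the logical scaffolding you build around them diverges from the paper's and contains a genuine gap.

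First, the paper does not argue ``if both $\mP$ and $\mQ$ were bad simultaneously we would get a contradiction.'' The disjunction in the lemma comes directly from the proof of \Cref{lem:reversing_paths}: one assumes WLOG $|W_X| \le |W_Y|$, and under that assumption the paths from $U$ to $Y$ (i.e.\ $\mP$) are the ones chosen \emph{first}, with $|\bigcup\mP|$ minimal after deleting $W_X \cup W_Y$. This one-sided minimality is what makes the rerouting argument run, and it only runs for the family chosen first; the lemma's conclusion is then proved for $\mP$ alone, not by showing both badness conditions cannot coexist. Your proposed optimization — minimize $|\bigcup\mP \cup \bigcup\mQ|$ globally and derive a contradiction from both conditions — is not obviously workable: a shortcut of a path in $\mP$ through $\mS$ might collide with a path in $\mQ$, and nothing in your setup rules that out. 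The paper sidesteps this precisely because the $\mP$ paths are vertex-minimal \emph{after removing} $W_X \cup W_Y$, and crucially all paths that do pass through $W_X \cup W_Y$ have length exactly two, so they can be excluded from the combinatorics.

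Second, your account of where $8k^2+4k$ comes from is too vague to substitute for the actual counting. The paper first pigeonholes the $>8k^2+4k$ bad branch vertices into a single path $P \in \mP$, getting $>8k+4$ of them near one path; it then shows (via \Cref{obs:distances}, which caps the ``degree'' of any vertex on $P$ at two) that a bipartite matching of size $\ge 4k+2$ exists between these branch vertices and interior vertices of $P$ at in-distance exactly $2$; discarding $p_1, p_2$ leaves $\ge 4k$ middle vertices of short $\mS$-routes, at least $2k$ of which avoid $W_X \cup W_Y$; each must already lie on a path of $\mP$ (else reroute to shorten $P$), so two lie on the same path, both at in-distance $1$ from $U\setminus U_{\mP}$, contradicting an earlier observation. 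Your sketch (``$k$ paths times $O(k)$ vertices per path plus slack'') points in the right numerical direction but misses the matching step and the role of $W_X \cup W_Y$, both of which are essential to make the rerouting legal.
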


\begin{proof}
Apply \Cref{lem:reversing_paths} with $A = X$, $B = Y$, and $L = U$. Using the proof and notation of \Cref{lem:reversing_paths}, assume that $|W_X| \le |W_Y|$. Then recall that we may choose the paths from $U$ to $Y$ first minimally (with respect to the number of vertices used) upon the removal of $W_X \cup W_Y$, a set of at most $2k$ vertices. Recall also that each such path which uses a vertex of $W_X \cup W_Y$ has length two.
Suppose, to the contrary, that there is a set $U' \subset U \setminus U_{\mP}$ of more than $8k^2+4k$ vertices such that for every $u \in U'$ there is $x \in \bigcup\mP \setminus U_{\mP}$ with $d^{\text{in}}(u, x) \le 2$. We claim that this contradicts minimality.
Indeed, by the pigeonhole principle there is a set $U'_0 \subset U'$ of size more than $8k + 4$, and a path $P \in \mP$ such that for each 
$u \in U'_0$ there is some $x \in P$ with $d^{\text{in}}(u, x) \le 2$. From \Cref{obs:distances}, it follows that 
for each interior vertex $v$ of $P$ there are at most two vertices of $U'_0$ that are at in-distance $2$ from $v$. Therefore $P$ 
must have more than two edges so does not intersect $W_X \cup W_Y$. Note that $P$ contains at most one vertex at in-distance $1$ from a vertex in $U \setminus U_{\mP}$, as otherwise we may reroute $P$ and obtain a shorter path avoiding $W_X \cup W_Y$. Let $D$ denote the set of all vertices of $P$ at in-distance exactly $2$ from some vertex of $U'_0$, and consider the bipartite graph with vertex sets $U'_0$, $D$ where $uv$ is an edge whenever $d^{\text{in}}(u, v) = 2$. We claim that this bipartite graph has a matching of size at least $4k + 2$. Let $\mM$ be a maximum matching, and suppose $|\mM| < 4k + 2$. Let $M_1$, $M_2$ denote the endpoints of the matching in $U_0'$, $D$, respectively. Note that $|U'_0 \setminus M_1| > 8k + 4 - (4k + 2) = 4k + 2$. Also, the degree of every vertex in $U'_0 \setminus M_1$ is at least one, and there are no edges between $U'_0 \setminus M_1$ and $D \setminus M_2$ by the maximality of $\mM$. Since $|M_2| < 4k + 2$, it follows that some vertex in $M_2$ has degree at least $3$. But this contradicts \Cref{obs:distances}.

Therefore, we may choose a set $D' \subset D$ of at least $4k + 2$ vertices corresponding to distinct vertices of $U'_0$.
Let $P = p_0\ldots p_\ell$, where $p_0 \in U$ and $p_\ell \in X$, $F:= D' \setminus \{p_1, p_2\}$. For each $p_j \in F$, we may choose vertex disjoint  
directed paths $u_jm_jp_j$ of length $2$ in $\mS$, where $u_j \in U'_0$. Accordingly, there are at least $4k$ `middle vertices' $m_j$, at least 
$2k$ of which are disjoint from $W_X \cup W_Y$; let $M$ denote the set of middle vertices disjoint from $W_X \cup W_Y$. Now, suppose some $m_j \in M$ does not intersect any path in $\mP$. Then we may replace $P$ with the path $u_jm_jp_jP$, which is shorter and still avoids $W_X \cup W_Y$, a contradiction. Thus, each middle vertex in $M$ belongs to some member of $\mP$ and so by the pigeonhole principle there is a path $P'$ which contains at least two vertices of $M$. But both of 
these vertices are at in-distance $1$ from a vertex in $U\setminus U_{\mP}$, which, as noted before, is a contradiction. Hence at most $8k^2 + 4k$ vertices in $U \setminus U_{\mP}$ have the stated property, as claimed.
A symmetric argument shows that we may choose $\mQ$ with the stated property in the event that $|W_Y| \le |W_X|$. This completes the proof of the lemma.
\end{proof}

Suppose $\mF$ is a collection of pairwise disjoint directed paths from $U$ to $Y$ (internally disjoint from $U$), and let $P = p_0 \ldots p_t$ be any path in $\mF$.
We call the pairs $(p_0, p_1)$ and $(p_0, p_2)$ \emph{trivial} if they have in-distance at most $2$ in $\mS$; any other pair with in-distance at most $2$ is \emph{nontrivial}.
For a subset $U' \subseteq U$ we shall say that $\mF$ is $U'$-\emph{good} if no nontrivial pair of vertices from $U' \times \left(\bigcup\mF \setminus U_{\mF}\right)$ is at in-distance at most $2$ in $\mS$. In particular, each path $P \in \mF$ intersects $U'$ in at most one vertex, namely its initial vertex. 
Suppose that $\mF$ satisfies the property stated in \Cref{lem:branch}. Then we have the following:

\begin{claim}\label{claim:good}
There exists a subset $U' \subset U \setminus U_{\mF}$ of size at least $2k$ such that $\mF$ is $U'$-good.
\end{claim}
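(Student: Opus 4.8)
The goal is to extract from $U \setminus U_{\mF}$ a large subset $U'$ so that $\mF$ becomes $U'$-good: no \emph{nontrivial} close pair survives. The plan is to start from the set $U \setminus U_{\mF}$, which has size at least $12k^2 - k$ (since $|U| = 12k^2$ and $|U_{\mF}| \le k$), and delete all branch vertices that could possibly be responsible for a nontrivial close pair. Concretely, I would first use \Cref{lem:branch}: since $\mF$ satisfies that property, there are at most $8k^2 + 4k$ vertices of $U \setminus U_{\mF}$ that are in-close to $\bigcup\mF \setminus U_{\mF}$, i.e., at in-distance at most $2$ from \emph{some} vertex of $\bigcup \mF \setminus U_{\mF}$. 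Delete all of these. What remains is a set $U''$ of size at least $12k^2 - k - (8k^2 + 4k) = 4k^2 - 5k \ge 2k$ (for $k \ge 2$), none of whose vertices is in-close to any vertex of $\bigcup\mF \setminus U_{\mF}$.

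Now I need to check that setting $U' = U''$ actually kills all nontrivial pairs. A nontrivial pair is a pair $(u, x) \in U' \times (\bigcup\mF \setminus U_{\mF})$ with $d^{\mathrm{in}}(u,x) \le 2$ where $x$ is \emph{not} the vertex $p_1$ or $p_2$ following the initial branch vertex $p_0$ of the path $P \in \mF$ containing $x$ (and $x \ne p_0$ since $x \notin U_{\mF}$). But by construction, every vertex $u \in U''$ has $d^{\mathrm{in}}(u, x) \ge 3$ for \emph{every} $x \in \bigcup\mF \setminus U_{\mF}$ — that is exactly what "not in-close" means. So in fact there are no close pairs at all between $U''$ and $\bigcup\mF \setminus U_{\mF}$, trivial or nontrivial; in particular none are nontrivial, so $\mF$ is $U''$-good. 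The final sentence of the claim's statement (each $P \in \mF$ meets $U'$ in at most one vertex, its initial vertex) follows because the paths of $\mF$ are internally disjoint from $U$, so a path can meet $U$ only in its initial vertex $p_0$; whether or not $p_0 \in U'$, no other vertex of $P$ lies in $U \supseteq U'$.

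The only real content is the bookkeeping in the size bound, so I should be careful there: I want $|U| - |U_{\mF}| - (8k^2 + 4k) \ge 2k$. Using $|U| = 12k^2$ and $|U_{\mF}| \le |\mF| = k$ gives $12k^2 - k - 8k^2 - 4k = 4k^2 - 5k$, and $4k^2 - 5k \ge 2k \iff 4k^2 \ge 7k \iff k \ge 7/4$, which holds for all integers $k \ge 2$. I expect the main (very mild) obstacle is simply making sure I am quoting \Cref{lem:branch} with the correct orientation — the lemma gives us a \emph{choice} of $\mP$ or $\mQ$ with the in-close (resp.\ out-close) bound, so when I invoke the claim I should state it for whichever of $\mP$, $\mQ$ was selected, and correspondingly phrase everything with "in-distance"/"in-close" or its symmetric "out" version; the symmetric case is identical by reversing all edges. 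Everything else is immediate from the definition of "in-close" and the fact that $|U_{\mF}| \le k$.
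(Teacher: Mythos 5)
Your proof is correct and takes essentially the same route as the paper: delete $U_{\mF}$ together with all branch vertices in-close to $\bigcup\mF \setminus U_{\mF}$, invoke \Cref{lem:branch} to bound the number deleted by $8k^2+4k$, and observe that $12k^2 - k - (8k^2+4k) = 4k^2-5k \ge 2k$ for $k\ge 2$. The paper phrases the bound as removing at most $8k^2+5k$ vertices from $U$ in one step, but the arithmetic and the argument are identical; your extra remarks (no close pairs remain at all, the choice of $\mP$ vs.\ $\mQ$ governs the in/out orientation) are accurate and implicit in the paper's ``clearly $\mF$ is $U'$-good.''
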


\begin{proof}
This follows immediately from the previous lemma. Indeed, remove from $U$ every vertex in $U_{\mF}$ and every vertex in $U \setminus U_{\mF}$
at in-distance at most $2$ in $\mS$ from some vertex of $\bigcup \mF \setminus U_{\mF}$; let $U'$ denote the remaining set of vertices. By \Cref{lem:branch}, we have removed at most $8k^2 + 5k$ vertices. As $|U| = 12k^2$ we have 
$|U'| \ge 12k^2 - (8k^2 + 5k) \ge 2k$, since $k \ge 2$. Clearly $\mF$ is $U'$-good.
\end{proof}

We shall assume without loss of generality that we may choose the paths from $U$ to $Y$ with the property stated in \Cref{lem:branch}. So
the previous two claims show that we may find collections of vertex disjoint directed paths $\mP, \mQ$ which are internally disjoint from $U$ and such that the paths in $\mP$ go from $U$ to $Y$, 
the paths in $\mQ$ go from $X$ to $U$, and $\mP$ is $U'$-good for some $U' \subset U \setminus U_{\mP}$ with $|U'| \ge 2k$.  
Conditioned on this, we assume that $\mP \cup \mQ$ minimizes the number of edges outside of $\mS$, and again conditioned on this, we take such a pair with $|\bigcup \mP| +| \bigcup \mQ|$ as small as possible.
Let $U'' = U' \setminus U_{\mQ}$ so that $|U''| \ge k$ and it is disjoint from $U_{\mP} \cup U_{\mQ}$; we may assume that $U'' = \{u_1, \ldots , u_k\}$ has precisely $k$ elements.
We now show that one can reroute the paths in $\mP \cup \mQ$ through 
$U''$ in order to create the desired paths linking $x_i$ to $y_i$ for each $i \in [k]$. Let $U_{\mP} = \{z_1, \ldots ,z_k\}$ and $U_{\mQ} = \{w_1, \ldots , w_k\}$ 
so that $z_i$ is the initial vertex in $U$ of the path $P_i \in \mP$ with terminal vertex $y_i \in Y$, and $w_i$ is the terminal vertex in $U$ of the path $Q_i \in \mQ$ with 
initial vertex $x_i \in X$. Recall that for every pair of branch vertices $u, v \in U$, $P_{uv}$ and $P_{vu}$ denotes the path in $\mK$ from $u$ to $v$, and from $v$ to $u$, respectively. The following sequence of claims show that we can control intersections of paths in $\mP \cup \mQ$ with appropriate paths 
in $\mS$ in order to link each $x_i$ to $y_i$. 

\begin{claim}\label{claim:intersect0}
Suppose some path $Q\in \mQ$ intersects $ L_{w_iu_i}^{u_i} \in \mL_{u_i}$, for some $i \in [k]$. Let $z$ be the first vertex of $L_{w_iu_i}^{u_i}$ in the intersection. Then one of the following holds:  $z$ is the terminal vertex of $ L_{w_iu_i}^{u_i}$ and $z\in Q_i$, or $z$ is the second vertex of $ L_{w_iu_i}^{u_i}$. 
\end{claim}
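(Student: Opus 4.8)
The plan is to argue from the two‑level minimality of the pair $(\mP,\mQ)$: I will show that if $z$ lies strictly inside $L := L_{w_iu_i}^{u_i}$, or at its terminal vertex but on a path different from $Q_i$, then $Q$ can be rerouted so as to decrease the number of edges outside $\mS$, or else to keep that number fixed while decreasing $|\bigcup\mP|+|\bigcup\mQ|$ — contradicting the choice of $\mP\cup\mQ$.

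First I would record the relevant structure of $L$. Write $L = \ell_0\ell_1\cdots\ell_s$, so that $\ell_0=u_i$ and $\ell_s$ is the penultimate vertex of $P_{w_iu_i}$ (note that the loop $L$ exists precisely because $P_{w_iu_i}$ has length at least two). Then: (i) every edge of $T$ between $u_i$ and $V(L)$ lies in $E(\mS)$, since $L\in\mL_{u_i}$; (ii) $L$ is minimal, hence backwards transitive, so $\ell_j u_i\in E(T)$ for every $j\ge2$; and (iii) the vertices $\ell_1,\dots,\ell_{s-1}$ lie on no path of $\mK$ and are not branch vertices, while $\ell_s$ is an interior vertex of $P_{w_iu_i}$; in all cases $\ell_j\notin U$ for $1\le j\le s$. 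Moreover, since $u_i\in U''\subseteq U\setminus(U_{\mP}\cup U_{\mQ})$ and a path of $\mQ$ meets $U$ only in its (unique) terminal vertex, which lies in $U_{\mQ}$, we have $u_i\notin\bigcup\mQ$; hence $z\ne\ell_0$, so $z=\ell_j$ for some $1\le j\le s$, and we may assume $j\ge2$, since $j=1$ gives the second alternative of the claim.

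Now the reroute. Write $Q=q_0\cdots q_m$ with $q_0\in X$, $q_m=:w_{j'}\in U_{\mQ}$, and $q_a=z=\ell_j$. By (ii) and the fact that $u_i\notin V(Q)$, the sequence $\tilde Q:=q_0\cdots q_{a-1}\,\ell_j\,u_i$ is a directed path from $q_0$ to $u_i$, internally disjoint from $U$, and $\tilde\mQ:=(\mQ\setminus\{Q\})\cup\{\tilde Q\}$ is again a family of pairwise disjoint directed paths from $X$ to $U$ that are internally disjoint from $U$: the only new vertex, $u_i$, lies on no path of $\mP\cup(\mQ\setminus\{Q\})$. Since $\mP$ is untouched it is still $U'$-good, so the pair $(\mP,\tilde\mQ)$ satisfies all the conditions under which $\mP\cup\mQ$ was chosen to be minimal. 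By (i), $\ell_j u_i\in E(\mS)$, so $|E(\tilde Q)\setminus E(\mS)|\le|E(Q)\setminus E(\mS)|$, with equality only if every edge of the subpath $q_a\cdots q_m$ lies in $E(\mS)$. If the inequality is strict we contradict the minimality of the number of edges outside $\mS$, so assume it is an equality. As $q_a=\ell_j\notin U$ while $q_m\in U$ we have $a\le m-1$; and if $a\le m-2$ then $|V(\tilde Q)|=a+2<m+1=|V(Q)|$, so $|\bigcup\mP|+|\bigcup\tilde\mQ|<|\bigcup\mP|+|\bigcup\mQ|$, again a contradiction. Hence $a=m-1$, i.e.\ $z$ is the penultimate vertex of $Q$ and the edge $\ell_j w_{j'}$ lies in $E(\mS)$.

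It then remains to see that $\ell_j w_{j'}\in E(\mS)$ with $j\ge2$ forces $j=s$ and $w_{j'}=w_i$ (equivalently $Q=Q_i$); since $\ell_s$ is the terminal vertex of $L$ and $\ell_s\in V(Q_i)$, this is the first alternative of the claim. Here, as $\ell_j\notin U$ the vertex $\ell_j$ is not a branch vertex, and $L$ is internally disjoint from $V(\mK)$ while the paths of $\mK$ (and those of $\mL$) are internally disjoint from one another; inspecting the four kinds of edges declared to belong to $\mS$ shows that $\ell_j w_{j'}$ can lie in $E(\mS)$ only when $\ell_j$ lies on a path of $\mK$ — hence only when $j=s$, in which case $\ell_s$ is interior to $P_{w_iu_i}$ — and the same inspection then forces $w_{j'}\in\{w_i,u_i\}$, whence $w_{j'}=w_i$ as $w_{j'}\ne u_i$. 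The step I expect to be the crux is this ``equality'' branch of the minimality argument, where the reroute improves neither parameter: handling it cleanly requires a careful bookkeeping of exactly which edges of $T$ were placed in $E(\mS)$, precisely so as to pin down that the edge from the penultimate vertex of $Q$ to its terminal branch vertex lies in $E(\mS)$ only when that branch vertex is $w_i$.
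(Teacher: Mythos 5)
Your proof is correct and takes essentially the same approach as the paper's: in both, the key move is to reroute $Q$ through the edge $zu_i$ (available by backward transitivity of $L$ and by the fact that this edge lies in $E(\mS)$), and then appeal to the two-level minimality of $\mP\cup\mQ$ to either drop an edge outside $\mS$ or shrink $|\bigcup\mP|+|\bigcup\mQ|$. The one stylistic difference is that the paper treats ``$z$ interior'' and ``$z$ terminal'' as two separate subcases (asserting ``$Q=Q_i$'' somewhat tersely in the first), whereas you treat all $j\ge 2$ uniformly and extract $a=m-1$ and $\ell_jw_{j'}\in E(\mS)$ via an explicit vertex count before pinning down $j=s$ and $w_{j'}=w_i$ by inspecting the edge types of $\mS$ — slightly more elaborate bookkeeping, but the same underlying idea.
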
 
\begin{proof}
Suppose $z$ is not the second vertex of $ L_{w_iu_i}^{u_i}$. If $z$ is an interior point of $ L_{w_iu_i}^{u_i}$, then $zu_i \in E(T)$ by minimality of the path $L_{w_iu_i}^{u_i}$. Note that if $Q$ has an edge which is not in $E(\mS)$ after $z$ then we have a contradiction: indeed replacing 
$Q$ with $Qzu_i$ yields a collection of paths with fewer edges outside of $E(\mS)$. Otherwise, $Q=Q_i$ and it must use at least $2$ edges after $z$, so we obtain a contradiction to the minimality of $|\bigcup \mP| + |\bigcup \mQ|$ by rerouting the path as before. 
Therefore, $z$ must be the terminal vertex of $L_{w_iu_i}^{u_i}$. Finally, $z$ must belong to $Q_i$, otherwise we may similarly reroute $Q$ through $u_i$, decreasing the number of edges used outside $E(\mS)$. 
\end{proof}

\begin{claim}\label{claim:intersect1}
No path in $\mP$ intersects $P_{w_iu_i}$. Moreover, if $q_i$ denotes the last vertex in $P_{w_iu_i}$ which occurs as the intersection of some path in $\mQ$, then $q_i \in Q_i$.
\end{claim}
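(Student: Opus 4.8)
The plan is to argue by contradiction, using the way $(\mP,\mQ)$ was chosen — admissible (the prescribed types of paths, with $\mP$ being $U'$-good for some $U'$ with $|U'|\ge 2k$), then with the number of edges outside $E(\mS)$ minimal, then with $|\bigcup\mP|+|\bigcup\mQ|$ minimal — together with the $U'$-goodness of $\mP$ and \Cref{obs:distances}. Write $P_{w_iu_i}=v_0v_1\cdots v_s$ with $v_0=w_i$ and $v_s=u_i$. Since the paths of $\mP\cup\mQ$ are pairwise disjoint and internally disjoint from $U$, begin in $U_{\mP}\cup X$ and end in $Y\cup U_{\mQ}$, and since $w_i$ lies on $Q_i$ while $u_i\in U''$ avoids $U_{\mP}\cup U_{\mQ}$, one sees at once that no path of $\mP$ contains $w_i$ or $u_i$, no path of $\mQ$ contains $u_i$, and $w_i$ lies on $Q_i$ and on no other path of $\mQ$. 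Hence a path of $\mP$ or of $\mQ$ that meets $P_{w_iu_i}$ does so only in an interior vertex $v_\ell$ with $1\le\ell\le s-1$; moreover $q_i\in\{v_0,\dots,v_{s-1}\}$, and if $q_i=v_0=w_i$ the ``moreover'' assertion is immediate.

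For the first assertion, suppose some $P\in\mP$ meets $P_{w_iu_i}$ in an interior vertex $v_\ell$, so $P_{w_iu_i}$ is a path of $\mS$. If $\ell\le s-2$, backwards transitivity of the minimal path $P_{w_iu_i}$ orients the chord between $u_i=v_s$ and $v_\ell$ as $u_i\to v_\ell$; since both its endpoints lie in $\{u_i\}\cup V(P_{w_iu_i})$, this chord belongs to $E(\mS)$, so $d^{\mathrm{in}}(u_i,v_\ell)=1$. If $\ell=s-1$ and $s\ge 3$, the same reasoning yields the $\mS$-walk $u_i\to v_{s-2}\to v_{s-1}$ (a backward chord followed by a path-edge), so $d^{\mathrm{in}}(u_i,v_\ell)\le 2$. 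In either case $(u_i,v_\ell)$ is a pair in $U'\times\bigl(\bigcup\mP\setminus U_{\mP}\bigr)$ at in-distance at most $2$, and it is nontrivial since $u_i$ is not the initial vertex of any path of $\mP$; this contradicts the $U'$-goodness of $\mP$. The only remaining possibility is $s=2$, so that $v_1$ is the unique interior vertex of $P_{w_iu_i}$ and is simultaneously the terminal vertex of the loop path $L_{w_iu_i}^{u_i}$ and the initial vertex of $L_{w_iu_i}^{w_i}$; here I would instead use that every outgoing $\mS$-edge of $v_1$ either enters $U$ (namely $v_1\to u_i$) or leads into $L_{w_iu_i}^{w_i}$, which terminates at $w_i\in U$, so that $P$ must leave $v_1$ along an edge outside $E(\mS)$, and then reroute $P$ through these loop paths to contradict the minimality of $(\mP,\mQ)$.

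For the second assertion, suppose $q_i=v_\ell$ with $1\le\ell\le s-1$ lies on some $Q\in\mQ$ with $Q\neq Q_i$. By the maximality of $\ell$ in the definition of $q_i$ and by the first assertion, none of $v_{\ell+1},\dots,v_s$ lies on any path of $\mP\cup\mQ$. Replace $Q$ by the path obtained by following $Q$ up to $v_\ell$ and then taking the path-edges $v_\ell\to v_{\ell+1}\to\cdots\to v_s=u_i$ of $\mS$: this is a directed path from the initial vertex of $Q$ in $X$ to $u_i\in U$, internally disjoint from $U$ and disjoint from every other path of $\mP\cup\mQ$, so the new pair is again admissible ($\mP$, hence its $U'$-goodness, is untouched) and uses no more edges outside $E(\mS)$ than before. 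The improvement is strict: in $\mS$ the only edge leaving $v_\ell$ that neither enters $U$ nor leads into $L_{w_iu_i}^{w_i}$ is $v_\ell\to v_{\ell+1}$ (backwards transitivity kills forward chords, the chord $v_\ell\to w_i$, present only for $\ell\ge 2$, enters $U$, and $v_{s-1}\to u_i$ enters $U$); but $Q$ cannot leave $v_\ell$ along $v_\ell\to v_{\ell+1}$ since $v_{\ell+1}$ lies on no path of $\mQ$, while any $\mS$-route through $L_{w_iu_i}^{w_i}$ must eventually leave it by an edge outside $E(\mS)$, as $L_{w_iu_i}^{w_i}$ ends at $w_i\in U$ and $Q$ is internally disjoint from $U$. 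Hence the portion of $Q$ after $v_\ell$ contains an edge outside $E(\mS)$, so the reroute strictly decreases that count, contradicting the choice of $(\mP,\mQ)$. Therefore $q_i\in Q_i$.

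I expect the main obstacle to be precisely the bookkeeping sketched above: verifying, in every configuration, that each rerouted family is still admissible (pairwise disjoint paths of the prescribed types, with $\mP$ still $U'$-good for some $U'$ of size at least $2k$) and that the rerouting is strictly smaller in the chosen lexicographic order. The genuinely delicate point is the degenerate case in which $P_{w_iu_i}$ has length two (and its loop paths are long), where the naive reroute of a $\mP$-path through $u_i$ is forbidden because it would violate the endpoint condition on $\mP$, and one is forced to route through both loop paths $L_{w_iu_i}^{w_i}$ and $L_{w_iu_i}^{u_i}$ of $\mS$. This is exactly the place where the augmented structure $\mK_r^*$ does more work for us than a plain subdivision of $\overrightarrow{K}_r$ would.
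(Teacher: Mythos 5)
Your treatment of the first assertion for $s\ge 3$ is correct and in fact a genuine shortcut over the paper's argument: the $\mS$-walk $u_i\to v_{s-2}\to v_{s-1}$ (a backward chord into an interior vertex of $P_{w_iu_i}$ followed by a path edge, both lying in $E(\mS)$) gives $d^{\text{in}}(u_i,v_{s-1})\le 2$, so the penultimate vertex is killed directly by $U'$-goodness, exactly as the non-penultimate vertices are. The paper instead treats the penultimate case, for any $s\ge 2$, by an elaborate reroute through the loop path $L_{w_iu_i}^{u_i}$; your argument shows that reroute is only truly needed when $s=2$. Your treatment of the second assertion follows the paper's idea, and your justification that the reroute of $Q$ along $v_\ell\to\cdots\to u_i$ is a \emph{strict} improvement is sound (the paper compresses this to the remark that $Q_j$ must avoid $\{u_i,w_i\}$ and hence has a non-$\mS$ edge after $q_i$).

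The genuine gap, which you flag but leave open, is the $s=2$ case of the first assertion, and the sketch you give does not quite isolate what is needed. Observing that $P$ must leave $v_1$ by a non-$\mS$ edge does not by itself make any reroute cheaper: the natural reroute replaces the initial segment of $P$ (from its start in $U$ up to $v_1$) by $u_iL_{w_iu_i}^{u_i}v_1$, and this saves an edge only if $P$ has a non-$\mS$ edge \emph{before} $v_1$. That does hold (the only $\mS$-in-neighbours of $v_1$ are $w_i$ and the penultimate vertex of $L_{w_iu_i}^{u_i}$, and tracing back along $L_{w_iu_i}^{u_i}$ leads only to $u_i$, while $P$ neither starts at $u_i$ or $w_i$ nor passes through them internally), but it is a different observation from the one you invoke. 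More substantially, the reroute must be carried out in a way that handles $L_{w_iu_i}^{u_i}$ meeting other paths of $\mP\cup\mQ$: the paper chooses $z$ to be the \emph{first} vertex of $L_{w_iu_i}^{u_i}$ lying on a $\mP$-path, reroutes that particular path, disposes of $\mQ$-intersections with the initial segment $L'$ via \Cref{claim:intersect0}, and then verifies with \Cref{obs:distances} (including the case $w_i\in U'$) that the rerouted family is still $U'$-good, so that the contradiction to the minimal choice of $(\mP,\mQ)$ is legitimate. None of this bookkeeping appears in your sketch, and it is exactly what makes the $s=2$ case go through.
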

\begin{proof}
No path in $\mP$ intersects $\{u_i, w_i\}$, so it suffices to show that no such path intersects the interior of $P_{w_iu_i}$.  
Therefore, we may assume that $P_{w_iu_i}$ has length at least $2$. Suppose first that some $P \in \mP$ contains a vertex $v$ in the interior. 
Note that $v$ must be the penultimate vertex of $P_{w_iu_i}$. Otherwise, $u_iv \in E(T) \cap E(\mS)$ by the minimality of the subdivision $\mK$, and 
this contradicts the fact that $\mP$ is $U'$-good. 
Consider the loop path $L = L_{w_iu_i}^{u_i} \in \mL_{u_i}$ at $u_i$ ending at $v$, and recall that 
the edges of $L$ are edges of $\mS$. Let $z$ be the first vertex in $L_{w_iu_i}^{u_i}$ belonging to some path $P' \in \mP$: such a 
vertex and path exist since we may take $z = v$ and $P' = P$. 
Let $L'$ be the initial segment of the path $L_{w_iu_i}^{u_i}$ ending at $z$.
 
Suppose first that no path in $Q\in \mQ$ intersects $L'$, and replace $P'$ with $P'' = u_iL'zP'$.
Since $P'$ cannot intersect $u_i$ or $w_i$ it must have an edge which is not in $E(\mS)$ before $z$. It follows that $P''$ has fewer edges outside of $\mS$. This is a contradiction to our choice of $\mP \cup \mQ$, provided $\mP'':= (\mP \setminus \{P'\}) \cup \{P''\}$ is $U'$-good. To see this, observe that any vertex of $L \setminus \{v\}$ is at in-distance at least $3$ from $w_i$. Moreover, if $w_i \in U'$, and $z=v$ (and hence $P' = P$), then $z$ is also at in-distance at least $3$ from $w_i$. Accordingly, if $w_i \in U'$, then every vertex of $P''$ is still at in-distance at least $3$ from $w_i$. By the minimality of $L$, every vertex in the interior of $L$ (except the second) is directed towards $u_i$; thus, the only vertices at in-distance at most $2$ from $u_i$ are the second and third vertices of $L$, say $x$ and $y$, respectively. But the pairs $(u_i, x)$ and $(u_i, y)$ are trivial pairs, and thus do not contradict $U'$-goodness. Lastly, by \Cref{obs:distances} every vertex of $P''$ (except possibly $u_i$) is at in-distance at least $3$ from every vertex of $U' \setminus \{u_i, w_i\}$. It follows that $\mP''$ is $U'$-good, which is a contradiction to our choice of $\mP \cup \mQ$. 

On the other hand, if some path $Q'\in \mQ$ intersects $L'$ in some vertex $r$, then by \Cref{claim:intersect0} $r$ must the second vertex of $L_{w_iu_i}^{u_i}$. Note that by $U'$-goodness, no path in $\mP$ contains the third vertex $r_1$ of $L_{w_iu_i}^{u_i}$, hence we can replace $Q'$ by $Q'rr_1u_i$ thus decreasing the number of edges outside $E(\mS)$.
Therefore we conclude that no path in $\mP$ can intersect $P_{w_iu_i}$.
Let us now show the second part of the claim. Suppose that $q_i \in Q_j$ for some $j \neq i$. Since $Q_j$ must avoid $\{u_i, w_i\}$ it contains 
an edge which is not in $E(\mS)$ after $q_i$. Replace $Q_j$ with $Q' = Q_jvP_{w_iu_i}$. Then by the previous paragraph, no path in $\mP$ intersects $Q'$ 
and the resulting collection of paths has fewer edges outside of $\mS$, a contradiction. This completes the proof of the claim.

\end{proof}

It remains to establish the analogous claims for the path $P_{u_iz_i}$, namely that intersections of paths in $\mP \cup \mQ$ with $P_{u_iz_i}$ 
and $L_{u_iz_i}^{u_i}$
behave as one expects. The arguments are similar to those in the previous two claims. \Cref{thm:main} will then be an immediate consequence.

\begin{claim}\label{claim:intersect00}
For every $i \in [k]$, no path in $\mP$ intersects $L_{u_iz_i}^{u_i} \in \mL_{u_i}$.
\end{claim}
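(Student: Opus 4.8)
The plan is to mirror the argument used in \Cref{claim:intersect1} for the path $P_{w_iu_i}$, but now working with the ``outgoing'' side of the branch vertex $u_i$, and exploiting $U'$-goodness of $\mP$ together with the minimality of our chosen pair $\mP \cup \mQ$ (fewest edges outside $E(\mS)$, then fewest total vertices). So suppose for contradiction that some path $P' \in \mP$ intersects $L_{u_iz_i}^{u_i}$, and let $z$ be the first vertex of $L_{u_iz_i}^{u_i}$ lying on a path of $\mP$. Recall the loop path $L_{u_iz_i}^{u_i}$ goes \emph{from} $u_i$ (its first vertex) to the penultimate vertex of $P_{u_iz_i}$, and all its edges are in $E(\mS)$; by minimality of the loop every interior vertex except the second is directed \emph{away from} $u_i$ on $\mS$.

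First I would handle the location of $z$ on $L_{u_iz_i}^{u_i}$. Since $P'$ is internally disjoint from $U$ it cannot pass through $u_i$, so $z$ is not the first vertex; and since $\mP$ is $U'$-good and $u_iv \in E(\mS)$ whenever $v$ is an interior vertex close to $u_i$, the vertex $z$ cannot be the second or third vertex of $L_{u_iz_i}^{u_i}$ either (those are exactly the vertices at in-distance $\le 2$ from $u_i$ along $\mS$, and if $u_i \in U'$ this would contradict $U'$-goodness — and if $u_i \notin U'$ we have $u_i = u_j$ for some $j$, still in $U''\subseteq U'$, so this cannot happen). Thus $z$ is an interior vertex of $L_{u_iz_i}^{u_i}$ with $zu_i \notin E(\mS)$ in general, but the key structural fact is the reverse direction: I would instead use that $u_i \to z$ can be routed inside $\mS$ via the initial segment $L'$ of $L_{u_iz_i}^{u_i}$ ending at $z$. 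Now split into cases according to whether some path $Q' \in \mQ$ meets $L'$. If not, replace $P'$ by $u_i L' z P'$ (the tail of $P'$ from $z$ onward); since $P'$ avoids $u_i$, it must contain an edge outside $E(\mS)$ before $z$, so the new collection has strictly fewer edges outside $E(\mS)$ — a contradiction, once we check the new collection is still $U'$-good. That check is exactly as in \Cref{claim:intersect1}: vertices of $L_{u_iz_i}^{u_i}$ beyond the second are at in-distance $\ge 3$ from $u_i$ by minimality of the loop, and at in-distance $\ge 3$ from every other vertex of $U'$ by \Cref{obs:distances}; any close pairs with $u_i$ itself are trivial pairs $(u_i, \text{2nd vertex}), (u_i, \text{3rd vertex})$.

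If instead some $Q' \in \mQ$ meets $L'$, I would invoke the analogue of \Cref{claim:intersect0} for the loop $L_{u_iz_i}^{u_i}$ (which should be stated or follows by the same one-line rerouting argument: if an interior vertex $r$ of the loop lies on $Q'$ and there is an edge outside $E(\mS)$ after $r$ on $Q'$, reroute $Q'$ through $r \to u_i$ — wait, that direction is wrong, so in fact the correct move is: $r$ must be the second vertex of the loop, since otherwise $u_i \to r$ is realizable in $\mS$ and we can instead shorten/reroute $Q'$ to pick up an edge of $\mS$); then the third vertex $r_1$ of the loop is not on any path of $\mP$ by $U'$-goodness, so replace $Q'$ by its segment up to $r$ followed by $r\,r_1\,\dots$ — more carefully, since the loop runs $u_i \to r \to r_1 \to \cdots$, we actually want to reroute the \emph{$\mP$-path} $P'$ as $u_i \to r \to r_1 \to \cdots \to z \to P'$, which is legitimate because $r, r_1$ and the rest of $L'$ are then disjoint from $\mQ$ after we have pinned $Q'$'s intersection to the single vertex $r$, and again this decreases the edge count outside $E(\mS)$ while preserving $U'$-goodness. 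Either way we contradict the minimality of $\mP \cup \mQ$, so no path of $\mP$ meets $L_{u_iz_i}^{u_i}$.

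The main obstacle I anticipate is the bookkeeping in the second case: making sure that after pinning $Q' \cap L'$ to a single early vertex, the rerouted $\mP$-path does not pick up a \emph{new} intersection with some other path of $\mP \cup \mQ$, and that $U'$-goodness genuinely survives — in particular handling the degenerate sub-case $z = v$ (the penultimate vertex of $P_{u_iz_i}$), where $P' = P$ and the new path terminates exactly at $z$, so one must separately argue $z$ is at in-distance $\ge 3$ from every relevant branch vertex, just as in the $w_i$-analogue. I expect the direction-of-edges subtlety (the loop $L_{u_iz_i}^{u_i}$ points away from $u_i$, opposite to $L_{w_iu_i}^{u_i}$) to be the one place where the argument is not a literal transcription of \Cref{claim:intersect1} and needs the reroute to be set up through a \emph{$\mP$-path} rather than a $\mQ$-path.
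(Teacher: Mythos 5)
Your proposal goes off the rails at the very first structural assertion: you state that $L_{u_iz_i}^{u_i}$ ``goes \emph{from} $u_i$ (its first vertex) to the penultimate vertex of $P_{u_iz_i}$.'' This is the reverse of the actual direction. By the paper's definition of the loop paths, for the pair $(u, v) = (u_i, z_i)$ the loop $L_{u_iz_i}^{u_i} = L_{uv}^u$ is the path going from the \emph{second} vertex of $P_{u_iz_i}$ \emph{to} $u_i$; the loop that starts at $u_i$ is $L_{w_iu_i}^{u_i}$ (the one used in \Cref{claim:intersect1}), not $L_{u_iz_i}^{u_i}$. Your closing remark even asserts the reverse again (``the loop $L_{u_iz_i}^{u_i}$ points away from $u_i$, opposite to $L_{w_iu_i}^{u_i}$''), so the confusion is systematic, not a typo.

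This matters because the correct orientation makes the claim essentially a one-liner, with no rerouting at all. Since $L_{u_iz_i}^{u_i}$ ends at $u_i$ and is minimal (backwards transitive), for any non-initial vertex $z$ of the loop with predecessor $z'$ we have $u_i z' \in E(T)$, and since $z' \in \bigcup \mL_{u_i}$ this edge lies in $E(\mS)$; hence $d^{\mathrm{in}}(u_i, z) \le 2$, and the pair $(u_i, z)$ is nontrivial because $u_i \in U'' \subset U \setminus U_\mP$ is never the initial vertex of a $\mP$-path. If $z$ is the first vertex of the loop, it is the second vertex of $P_{u_iz_i}$, which is at in-distance $1$ from $u_i$. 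Either way $U'$-goodness is violated directly, which is exactly the paper's proof. Your attempted transplant of the rerouting machinery from \Cref{claim:intersect1} is not only unnecessary, it is also internally inconsistent as written: you notice mid-argument that ``that direction is wrong,'' but the proposed fix (rerouting a $\mP$-path as $u_i L' z P'$, or pinning a $\mQ$-intersection to the second vertex of the loop) is built on the same reversed picture, never resolves which collection you are actually replacing a path in, and does not address why the rerouted path would still start in $U$ and leave $U'$-goodness intact. In short, the gap is a misreading of the direction of $L_{u_iz_i}^{u_i}$, and once that is corrected the elaborate case analysis you sketch is not needed and would not, as written, close.
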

\begin{proof}
Suppose some $P \in \mP$ intersects $L_{u_iz_i}^{u_i}$ in a vertex $z$. Then $z$ cannot be the first vertex of $L_{u_iz_i}^{u_i}$, 
as this would contradict the fact that $\mP$ is $U'$-good. 
Therefore, if $z'$ denotes the vertex preceding $z$ in $L_{u_iz_i}^{u_i}$, then by the minimality of paths in $\mL$, 
we have $u_iz' \in E(T) \cap E(\mS)$. But then $z$ is at in-distance $2$ from $u_i$, contradicting $U'$-goodness.

\end{proof}

\begin{claim}\label{claim:intersect2}
Let $p_i$ denote the first vertex in $P_{u_iz_i}$ which occurs as the intersection of some path in $\mP$. Then no path 
in $\mQ$ intersects $P_{u_iz_i}$ and $p_i \in P_i$.
\end{claim}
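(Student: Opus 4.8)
The plan is to mirror the two-part argument already carried out for $P_{w_iu_i}$ in \Cref{claim:intersect1}, but now reading the structure $\mS$ ``in reverse'' — replacing in-distances by out-distances and swapping the roles of $\mP$ and $\mQ$, just as the text promised. So first I would show no path in $\mQ$ intersects the interior of $P_{u_iz_i}$. We may assume $P_{u_iz_i}$ has length at least $2$. Suppose some $Q \in \mQ$ contains an interior vertex $v$ of $P_{u_iz_i}$. By minimality of the subdivision $\mK$, if $v$ were not the \emph{second} vertex of $P_{u_iz_i}$ we would have $vu_i \in E(T) \cap E(\mS)$, so $v$ would be at out-distance at most $2$ from $u_i$; since $v \notin U_{\mQ}$ and $u_i \in U'$ this is a nontrivial pair, contradicting $U'$-goodness of $\mQ$ (note $\mQ$ inherits $U'$-goodness from the symmetric version of \Cref{lem:branch} and \Cref{claim:good}). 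Hence $v$ is the second vertex of $P_{u_iz_i}$, and we invoke the loop path $L = L_{u_iz_i}^{u_i} \in \mL_{u_i}$, whose edges are edges of $\mS$ and which runs from $v$ (the second vertex of $P_{u_iz_i}$) to $u_i$.

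Now I would run the rerouting exactly as in \Cref{claim:intersect1}: let $z$ be the first vertex of $L$ lying on some path $Q' \in \mQ$ (such exists, taking $z=v$, $Q'=Q$), and let $L'$ be the terminal segment of $L$ beginning at $z$ (so $L'$ runs from $z$ to $u_i$). If no path of $\mP$ meets $L'$, replace $Q'$ by $Q' = Q' z L'$, i.e.\ follow $Q'$ up to $z$ and then run along $L'$ into $u_i$; since $Q'$ avoids $\{u_i, z_i\}$ it has an edge outside $E(\mS)$ after $z$, so the new collection has fewer edges outside $\mS$. As in \Cref{claim:intersect1} one checks the new $\mQ''$ is still $U'$-good: every vertex of $L \setminus \{v\}$ is at out-distance at least $3$ from $z_i$; if $z_i \in U'$ and $z = v$ then $z$ is too, by \Cref{claim:intersect00}-style reasoning together with \Cref{obs:distances}; the only vertices at out-distance $\le 2$ from $u_i$ are the second and third vertices of $L$, which form trivial pairs; and \Cref{obs:distances} handles every vertex of $U' \setminus \{u_i, z_i\}$. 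If instead some path of $\mP$ meets $L'$, then by \Cref{claim:intersect00} no path of $\mP$ meets $L$ at all — contradiction — so actually this subcase cannot arise (this is slightly cleaner than the $P_{w_iu_i}$ case, where \Cref{claim:intersect00}'s analogue was not available); alternatively, if a path of $\mP$ meets $L'$ we may use \Cref{claim:intersect00} directly to rule it out. Either way no path of $\mQ$ meets the interior of $P_{u_iz_i}$, hence (since $\mQ$ avoids $\{u_i,z_i\}$) no path of $\mQ$ meets $P_{u_iz_i}$.

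For the second assertion, $p_i \in P_i$: let $p_i$ be the first vertex of $P_{u_iz_i}$ on some path of $\mP$, and suppose $p_i \in P_j$ with $j \neq i$. Since $P_j$ avoids $\{u_i, z_i\}$, it has an edge outside $E(\mS)$ \emph{before} $p_i$ (note the direction: $P_{u_iz_i}$ leads into $z_i$, which is the first vertex of $P_j$; $P_j$ starts at $z_j \in U$ and $p_i$ is reached only after leaving $U$, so the segment of $P_j$ before $p_i$ contains a non-$\mS$ edge). Replace $P_j$ by the path $u_i P_{u_iz_i} p_i P_j$ — that is, start at $u_i$, traverse $P_{u_iz_i}$ down to $p_i$, then continue along $P_j$ to $y_j$; since $z_j \notin \{u_i,z_i\}$ we must check this genuinely shortens the count of edges outside $\mS$, which it does as $P_{u_iz_i} \subseteq \mS$ while the discarded initial segment of $P_j$ had a non-$\mS$ edge. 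By the first part no path of $\mQ$ meets $P_{u_iz_i}$, and no path of $\mP$ other than $P_j$ does either (by choice of $p_i$), so the new collection is legal and has fewer edges outside $\mS$ — contradiction. Thus $p_i \in P_i$.

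The main obstacle I anticipate is the bookkeeping around $U'$-goodness when performing the reroute: one must verify that the edited path $Q'zL'$ (or $u_i P_{u_iz_i} p_i P_j$) does not create a fresh nontrivial pair at out-distance $\le 2$ with some vertex of $U'$. This is the same delicate point that made \Cref{claim:intersect1} more involved than expected, and it is why the richer structure $\mK_r^*$ (with its loop paths $\mL$, which are one-directional into/out of the branch vertices and hence keep interior vertices of loops ``far'' from the branch vertex on the $U'$-relevant side) is essential: without the loops, interior vertices of $P_{u_iz_i}$ could be at out-distance $2$ from $z_i$ and the reroute would fail to preserve goodness. I expect the directional sign-chasing — making sure ``before $p_i$'' versus ``after $z$'' lines up with the orientation of $P_{u_iz_i}$ and with which endpoint of the loop is the branch vertex — to be where care is most needed; everything else is a routine transcription of \Cref{claim:intersect0}, \Cref{claim:intersect00}, and \Cref{claim:intersect1} with in/out and $\mP$/$\mQ$ interchanged.
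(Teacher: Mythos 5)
Your plan assumes a symmetry that the paper deliberately does not have: you invoke ``$U'$-goodness of $\mQ$'' and claim $\mQ$ ``inherits'' it ``from the symmetric version of \Cref{lem:branch} and \Cref{claim:good}.'' But \Cref{lem:branch} only guarantees the property for \emph{one} of $\mP$, $\mQ$, and the paper fixes (WLOG) that it is $\mP$ which is $U'$-good; $\mQ$ is not. Your argument for the case where $v$ is not the second vertex of $P_{u_iz_i}$ is therefore built on a false premise. The paper's argument here is not the mirror image of \Cref{claim:intersect1} --- it is strictly simpler. Because $\mQ$-paths \emph{terminate} at $U$, one may simply truncate $Q$ at $v$ and append the edge $vu_i \in E(\mS)$ (which exists by minimality of $\mK$), obtaining $Qvu_i$ with strictly fewer non-$\mS$ edges, since $Q$ leaves $\mS$ after $v$ (it avoids $\{u_i,z_i\}$). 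This is exactly why the asymmetry between Claims~\ref{claim:intersect1} and~\ref{claim:intersect2} exists: $\mP$-paths leaving $U$ cannot be ``truncated'' into a branch vertex, so Claim~\ref{claim:intersect1} really needs $U'$-goodness; $\mQ$-paths arriving at $U$ can.

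Two further points. First, in the subcase where $v$ is the second vertex of $P_{u_iz_i}$, you take $z$ to be the \emph{first} vertex of $L = L_{u_iz_i}^{u_i}$ met by a $\mQ$-path and let $L'$ be the terminal segment of $L$ from $z$ to $u_i$. Since $L$ runs from $v$ towards $u_i$, the segment $L'$ may still contain other $\mQ$-intersections, so the rerouted path $Q'zL'$ is not guaranteed to be vertex-disjoint from the rest of $\mQ$. The paper takes $z$ to be the \emph{last} such vertex (closest to $u_i$), making $L' \setminus \{z\}$ clean, and then uses \Cref{claim:intersect00} only to rule out $\mP$-intersections with $L'$. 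Second, in the proof that $p_i \in P_i$ you correctly build $P' = P_{u_iz_i}p_iP_j$ but leave the $U'$-goodness of the modified family $\mP'$ as a side remark. This verification is not optional: it is what lets the contradiction go through, and the paper carries it out by observing that $P_{u_iz_i}p_i$ has length at least $3$ (by $U'$-goodness of the original $\mP$), that every interior vertex of $P_{u_iz_i}$ has $vu_i \in E(T)$ by minimality of $\mK$, and that the resulting close pairs $(u_i,x)$, $(u_i,y)$ are trivial by definition. Without this, the intended contradiction with the extremal choice of $\mP \cup \mQ$ is not established.
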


\begin{proof}
As before, it suffices to show that no path in $\mQ$ intersects the interior of $P_{u_iz_i}$, so we may assume that $P_{u_iz_i}$ has length 
at least $2$. Suppose some $Q \in \mQ$ intersects the interior of $P_{u_iz_i}$ at $v$. Note that since $Q$ does not meet $\{u_i, z_i\}$, it must leave $\mS$ at some time after $v$. If $v$ is not the second vertex of $P_{u_iz_i}$, then $vu_i \in E(T) \cap E(\mS)$, and so we may replace $Q$ with $Qvu_i$. This path 
has fewer edges outside of $\mS$ than $Q$, and this contradicts our minimal choice of $\mP \cup \mQ$. If $v$ is the second vertex, then let $L = L_{u_iz_i}^{u_i} \in \mL_{u_i}$ be the loop path at $u_i$ directed from $v$ to $u_i$. Let $z$ be the last vertex of $L$ which occurs as the intersection
of some path $Q' \in \mQ$ ($z$ and $Q'$ exist since we may take $z = v$ and $Q' = Q$), and let $L'$ be the subpath of $L$ from $z$ to $u_i$.
By \Cref{claim:intersect00}, no path in $\mP$ intersects $L'$, so replace $Q'$ with $Q'zL'u_i$. Again, the edges of $L'$ are in $E(\mS)$ so this path has fewer edges outside $\mS$ than $Q'$, a contradiction. It follows that no path in $\mQ$ intersects $P_{u_iz_i}$ as claimed.
For the second part of the claim, suppose that $p_i \in P_j$ for some $j \neq i$. Then $P_j$ avoids $\{u_i, z_i\}$ and therefore leaves $\mS$ at some time before $p_i$. 
Now, no path in $\mP \cup \mQ$ intersects the interior of the subpath $P_{u_iz_i}p_i$ so replace $P_j$ with $P' = P_{u_iz_i}p_iP_j$. 
This path has fewer edges outside of $\mS$. We claim that $\mP'  = (\mP \setminus \{P_j\}) \cup \{P'\}$ is $U'$-good. Indeed, note that since $\mP$ is $U'$-good, the subpath $P_{u_iz_i}p_i$ has length 
at least $3$. Also, for every $v \in P_{u_iz_i}$ we have that $vu_i \in E(T)$ by the minimality of $\mK$. So the only pairs at in-distance at most $2$ in 
$U' \times (\bigcup \mP' \setminus U_{\mP'})$ are the trivial pairs $(u_i, x)$ and $(u_i, y)$, where $x, y$ are the second and third vertices, respectively, of $P_{u_iz_i}$. But these pairs, by definition, do not contradict $U'$-goodness. It follows that $j = i$, and the claim is proved.

\end{proof}

By Claims~\ref{claim:intersect1} and~\ref{claim:intersect2}, the directed paths $Q_iq_iP_{w_iu_i}u_iP_{u_iw_i}p_iP_i$, for each $i \in [k]$, 
are pairwise vertex disjoint and link $x_i$ to $y_i$. This completes the proof of \Cref{thm:main}.

\end{proof}

\section{Final remarks and open problems}\label{sec:final}

The most obvious open problem is to reduce our bound of $4k$ on the connectivity in \Cref{thm:main}.  
We remark that an improvement on the connectivity bound in \Cref{lem:reversing_paths} translates directly into a better bound in \Cref{thm:main}. Unfortunately, we could not go beyond $4k$. Furthermore, \Cref{lem:reversing_paths} does \emph{not} hold if 
we replace $4k$ with anything smaller than $3k$. The following 
construction, of a $(3k-1)$-connected tournament $T$ where \Cref{lem:reversing_paths} fails, was communicated to us by Kamil Popielarz. Suppose $V(T) = [n]$ and partition $V(T)$ into disjoint sets $A, S, B, L$, where $L = V(T) \setminus (A \cup S \cup B)$, and $|A| = |B| = k$, $|S| = 2k - 1$. Direct the edges from $L$ to $A$; from $B$ to $L$; from $A$ to $S$ and from $S$ to $B$; and from $A$ to $B$. Inside $L$ we place a balanced blow-up of a directed triangle. That is, equitably partition $L$ 
into sets $L_1, L_2, L_3$ with directed edges $L_1 \rightarrow L_2$, $L_2 \rightarrow L_3$, $L_3 \rightarrow L_1$, and inside each of the $L_i$'s we orient the edges arbitrarily. Now, join every vertex in $S$ to all of $L_1$ and join every vertex of $L_2$ to all of $S$. Finally, orient the edges between $S$ and $L_3$, and the edges inside $A, B$, and $S$, arbitrarily. 

Provided $n$ is sufficiently large (depending on $k$), it is not hard to show that 
$T$ is $(3k-1)$-connected. Observe that we cannot get from $A$ to $L$ (disjointly from $B$) without using vertices of $S$. Similarly, we cannot get from $L$ to $B$ (disjointly from $A$) without using vertices of $S$. As $|S| = 2k - 1$, any path system as in \Cref{lem:reversing_paths} will not be pairwise disjoint. Accordingly, \Cref{lem:reversing_paths} fails for this tournament. We remark that a slight modification of this 
construction yields a tournament which additionally has large minimum in and out-degree.

Aside from improving our bound of $4k$ on the connectivity and resolving completely Pokrovskiy's conjecture, there are a few other 
open problems of interest. For example, what is the smallest function $d(k)$ such that \Cref{thm:subdivision} holds?

\begin{prob}
Determine the smallest function $d: \mathbb{N} \rightarrow \mathbb{N}$ such that any tournament 
$T$ with $\delta^+(T) \ge d(k)$ contains a subdivision of the complete directed graph $\overrightarrow{K}_k$.
\end{prob}

Note that our proof gives a doubly exponential bound on $d(k)$. Indeed, it is easy to check that $d(k)\leq 2^{2^{Ck^2}}$. 
Finally, while the conclusion of \Cref{thm:subdivision} does not hold if we replace $T$ with a general digraph, can we embed subdivisions of \emph{acyclic} digraphs
in digraphs of large minimum out-degree?
We end by recalling the following beautiful conjecture of Mader~\cite{Mader3} from 1985.

\begin{conj}
 For every positive integer $k$, there exists a function $f(k)$ such that every digraph with minimum out-degree at least $f(k)$ contains a subdivision of the transitive tournament of order $k$.
\end{conj}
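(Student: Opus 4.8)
The natural plan is to mimic the proof of \Cref{thm:subdivision} as closely as possible, building a subdivision of the transitive tournament $\mathrm{TT}_k$ on $k$ vertices one directed edge at a time. Fix the acyclic ordering $v_1,\dots,v_k$ of $V(\mathrm{TT}_k)$ and, by analogy with $m$-partial $\overrightarrow{K}_k$'s, let $f(k,m)$ be the least integer such that every digraph with minimum out-degree at least $f(k,m)$ contains a subdivision of some $m$-partial $\mathrm{TT}_k$ (a spanning subdigraph of $\mathrm{TT}_k$ with $m$ edges); one would try to establish a recursion $f(k,m+1)\le g(f(k,m))$ with $f(k,0)=1$. So suppose a minimal subdivision $K$ of an $m$-partial $\mathrm{TT}_k$, with branch vertices $v_1,\dots,v_k$, has been embedded in a digraph $D$ of very large minimum out-degree, and that we wish to insert a missing edge $v_iv_j$ (with $i<j$): we must find a $v_i$--$v_j$ dipath internally disjoint from $V(K)$. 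Working in $D':=D-(V(K)\setminus\{v_i,v_j\})$, if no such path exists then the set $R$ of vertices reachable from $v_i$ in $D'$ is closed under taking $D'$-out-neighbours, so every terminal strong component $S$ of $D'[R]$ satisfies $\delta^+(S)\ge\delta^+(D)-|V(K)|$, which is still enormous provided $|V(K)|$ is controlled.

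The recursion would then proceed exactly as in \Cref{thm:subdivision} and \Cref{cor:subdivision2}: apply the inductive hypothesis inside $S$ to re-embed an $m$-partial $\mathrm{TT}_k$, and derive a contradiction from a carefully chosen extremal parameter --- for instance, by having fixed $K$ so as to minimise the out-degree of a suitable anchor vertex, as the vertex $z$ is chosen in the proof of \Cref{thm:subdivision}, and observing that the new anchor inside $S$ has strictly smaller out-degree. Acyclicity of $\mathrm{TT}_k$ is what makes this plausible at all: one can orient the whole construction ``downstream'', always keeping the current structure inside a set reachable from the anchor, so that the obstruction witnessed by Thomassen's even-cycle-free digraphs --- which defeats the analogue of \Cref{thm:subdivision} for general digraphs and general target digraphs --- does not directly apply to the acyclic target $\mathrm{TT}_k$.

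I expect the main obstacle to be the very step where tournament-ness was used essentially for free. In a tournament a vertex of large out-degree reaches almost everything, and one always has the dichotomy ``$A\to B$ or $B\to A$'' that drives every reachability argument in \Cref{sec:subdivision} and \Cref{sec:main}; moreover \Cref{lem:Hav_Lid} lets one replace any high-out-degree tournament by one of bounded size. Both of these fail badly for general digraphs: the out-closure of a high-out-degree vertex can be confined to a small ``trap'', so that after embedding $K$ one may simply be unable to reach the component $S$ and the recursion never starts; and a vertex-minimal subdigraph of large minimum out-degree (e.g.\ a union of many edge-disjoint Hamilton cycles) can be arbitrarily large, so there is no analogue of the pruning in \Cref{claim:out_nbhd} that keeps $|V(K)|$ small. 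Overcoming this seems to require a genuinely new way of controlling reachability among the branch vertices, or a rooted reformulation of the conjecture (producing an out-subdivision hanging off a prescribed root) that is better behaved under induction; this is why the conjecture remains open in general, being currently known only for small $k$ and under additional structural hypotheses.
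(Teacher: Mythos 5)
This statement is Mader's conjecture from 1985, which the paper records as an \emph{open} problem in the final section --- there is no proof of it in the paper to compare against, and indeed no proof is known. You have correctly recognized this, so there is no gap to flag in the usual sense; your write-up is an analysis of obstructions rather than a purported proof, and it should be read as such.

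Your diagnosis of why the paper's machinery does not transfer is accurate and well targeted. The two tournament-specific ingredients you single out are exactly the ones that break: \Cref{lem:Hav_Lid} (which lets one pass to a bounded-size subtournament of prescribed minimum out-degree, and hence keeps $|V(K)|$ under control throughout the induction) has no analogue for general digraphs, as your Hamilton-cycle-union example shows; and the ``$A\to B$ or $B\to A$'' dichotomy, used repeatedly in \Cref{claim:out_nbhd} and in the main argument of \Cref{thm:subdivision} to force a high-out-degree terminal component $S_\ell$ or $S'_t$ into a convenient position (inside $N^+(y)$, dominated by $N^-(z)$, etc.), simply fails when the digraph is not a tournament. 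You are also right that acyclicity of the target is what keeps the conjecture alive at all: the even-cycle-free digraphs of Thomassen~\cite{Thomasseneven} rule out $\overrightarrow{K}_3$-subdivisions and hence any analogue of \Cref{thm:subdivision} for general digraphs, but say nothing against transitive-tournament subdivisions. One small caveat worth noting in your sketch: the set $R$ of vertices reachable from $v_i$ in $D'$ need not have large minimum out-degree, since out-neighbours of vertices in $R$ may lie in the deleted set $V(K)\setminus\{v_i,v_j\}$; one only gets $\delta^+(D'[R])\ge\delta^+(D)-|V(K)|$ \emph{inside} a terminal strong component of $D'$ after observing that $R$ is out-closed in $D'$, which is what you intended but should be stated carefully. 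None of this yields a proof, and you are correct that the conjecture remains open; the paper cites it precisely as motivation for further work.
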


Of course, since every acyclic digraph is contained in the transitive tournament of the same order, this conjecture (if true) would give an affirmative answer to the preceding 
question.

\section{Acknowledgements}
We would like to express our thanks to Kamil Popielarz for very helpful discussions throughout this project. We would also like to thank the referees for helpful comments.

    \bibliography{linked_tourn}

\providecommand{\bysame}{\leavevmode\hbox to3em{\hrulefill}\thinspace}
\providecommand{\MR}{\relax\ifhmode\unskip\space\fi MR }
\providecommand{\MRhref}[2]{%
  \href{http://www.ams.org/mathscinet-getitem?mr=#1}{#2}
}
\providecommand{\href}[2]{#2}
\begin{thebibliography}{10}

\bibitem{Bang-Jensen}
J.~Bang-Jensen, \emph{On the 2-linkage problem for semicomplete digraphs}, Ann.
  Discrete Math. \textbf{41} (1989), 23--38.

\bibitem{BollobasThomason}
B.~Bollob{\'a}s and A.~Thomason, \emph{Highly linked graphs}, Combinatorica
  \textbf{16} (1996), 313--320.

\bibitem{Havet-Lidicky}
F.~Havet and B.~Lidick{\'y}, \emph{Splitting a tournament into two
  subtournaments with given minimum outdegree}, Preprint (2014), 15 pp.

\bibitem{Jung}
H.A. Jung, \emph{Verallgemeinerung des $n$-fachen zusammenhangs f{\"u}r
  graphen}, Math. Ann. \textbf{187} (1970), 95--103.

\bibitem{Lap_Kuhn_Osthus}
D.~K{\"u}hn, J.~Lapinskas, D.~Osthus, and V.~Patel, \emph{Proof of a conjecture
  of {T}homassen on hamilton cycles in highly connected tournaments}, Proc.
  Lond. Math. Soc. \textbf{109} (2014), 733--762.

\bibitem{Larman_Mani}
D.G. Larman and P.~Mani, \emph{On the existence of certain configurations
  within graphs and the $1$-skeletons of polytopes}, Proc. Lond. Math. Soc.
  \textbf{20} (1974), 144--160.

\bibitem{Lichiardopol}
N.~Lichiardopol, \emph{Vertex-disjoint subtournaments of prescribed minimum
  outdegree or minimum semidegree: proof for tournaments of a conjecture of
  {S}tiebitz}, Int. J. Comb. \textbf{2012} (2012), 9 pp.

\bibitem{Mader1}
W.~Mader, \emph{Homomorphieeigenschaften und mittlere {K}antendichte von
  {G}raphen}, Math. Ann. \textbf{174} (1967), 265--268.

\bibitem{Mader3}
\bysame, \emph{Degree and local connectivity in digraphs}, Combinatorica
  \textbf{5} (1985), no.~2, 161--165.

\bibitem{Pokrovskiy}
A.~Pokrovskiy, \emph{Highly linked tournaments}, J. Comb. Th. Ser. B
  \textbf{115} (2015), 339--347.

\bibitem{ThomasWollan2}
R.~Thomas and P.~Wollan, \emph{An improved extremal function for graph
  linkages}, European J. Combin. \textbf{26} (2005), 309--324.

\bibitem{Thomassen_tourn}
C.~Thomassen, \emph{Connectivity in tournaments}, Graph Theory and
  Combinatorics, a Volume in Honour of Paul Erd\H{o}s, Academic Press, London,
  1984, pp.~305--313.

\bibitem{Thomasseneven}
\bysame, \emph{Even cycles in directed graphs}, Europ. J. Combin. \textbf{6}
  (1985), 85--89.

\bibitem{Thomassen_construct}
\bysame, \emph{Note on highly connected non-$2$-linked digraphs}, Combinatorica
  \textbf{11} (1991), 393--395.

\end{thebibliography}
    \bibliographystyle{amsplain}
\end{document}